\documentclass[11pt]{article}
\usepackage{amsfonts}
\usepackage{amsmath,color}
\usepackage[makeroom]{cancel}

\textheight 205 true mm \textwidth  150 true mm \oddsidemargin
2.5true mm \evensidemargin 2.5 true mm

%
%

\newcommand{\beq}{\begin{equation}}
\newcommand{\eeq}{\end{equation}}
\newcommand{\bea}{\begin{eqnarray}}
\newcommand{\eea}{\end{eqnarray}}
\newcommand{\beas}{\begin{eqnarray*}}
\newcommand{\eeas}{\end{eqnarray*}}

%
%
\newtheorem{theorem}{Theorem}[section]

\newtheorem{proposition}[theorem]{Proposition}

\newtheorem{lemma}[theorem]{Lemma}
\newtheorem{remark}[theorem]{Remark}
\newtheorem{example}[theorem]{Example}
\newtheorem{examples}[theorem]{Examples}
\newtheorem{foo}[theorem]{Remarks}

%
%
\newenvironment{proof}{\addvspace{\medskipamount}\par\noindent{\it
Proof}.}
{\unskip\nobreak\hfill$\Box$\par\addvspace{\medskipamount}}









\newcommand{\ee}{\ell}

\newcommand{\bM}{\mathbb M}

\newcommand{\Ho}{\mathcal H}
\newcommand{\V}{\mathcal V}

\newcommand{\M}{\mathbb M}

\newcommand{\ve}{\varepsilon}

\newcommand{\Ric}{\mathfrak {Ric}}
\newcommand{\ch}{\mathcal H}
\newcommand{\J}{\mathfrak J}

\newcommand{\ep}{\varepsilon}
\parindent=0pt

\title{Transverse Weitzenb\"ock formulas and curvature dimension inequalities on Riemannian foliations with totally geodesic leaves}
\author{Fabrice Baudoin, Bumsik Kim, Jing Wang}

\date{Department of Mathematics, Purdue University \\
 West Lafayette, IN, USA}



\begin{document}
\maketitle

\begin{abstract}
We prove a family of  Weitzenb\"ock  formulas on a Riemannian foliation with totally geodesic leaves. These Weitzenb\"ock formulas are naturally parametrized by the canonical variation of the metric. As a consequence, under natural geometric conditions,  the horizontal Laplacian  satisfies a generalized curvature dimension inequality.  Among other things, this curvature dimension inequality implies  Li-Yau estimates  for positive solutions of the horizontal heat equation, sharp eigenvalue estimates and a sub-Riemannian Bonnet-Myers compactness theorem whose assumptions only rely on the intrinsic geometry of the horizontal distribution.
\end{abstract}

\

\textbf{Keywords:} Sub-Riemannian geometry, Weitzenb\"ock formula, Bochner method, Riemannian foliation

\

\textbf{MSC2010: 53C12,53C17}

\tableofcontents


\section{Introduction}

In the recent few years, there has been a great deal of interest in developing geometric analysis methods in sub-Riemannian geometry that parallel the methods which  are available in Riemannian geometry (see for instance \cite{agrachev,agrachev2} and the references therein). One of these fundamental methods is the Bochner method which connects the geometry and topology of the ambient manifold to the analysis of a second order differential operator, the Laplace-Beltrami operator. In all generality, there is no canonical second-order operator on a sub-Riemannian manifold, see \cite{GL} for a closely related discussion. However, in many interesting situations, like in the Hopf fibrations, the sub-Riemannian geometry of interest is associated to the horizontal distribution of a Riemannian foliation. In that particular case, there is a canonical sub-Riemannian Laplacian: the horizontal Laplacian of the foliation. With such an operator in hand, it is then natural to wonder if a suitable analogue of  Bochner's formalism would make the bridge between the analysis of this horizontal Laplacian and the sub-Riemannian geometry and the topology of the ambient manifold. 

\

In the present paper, we prove that on a Yang-Mills type Riemannian foliation with totally geodesic leaves and a bracket generating horizontal distribution many fundamental tools are available to study the horizontal Laplacian and the associated  sub-Riemannian geometry. These tools include Li-Yau type gradient estimates for positive solutions of the heat equation and associated Harnack inequalities, volume comparison theorems for the metric balls, sharp Sobolev inequalities, sharp first eigenvalue estimates and corresponding rigidity results. Our work here builds on several previous works, notably \cite{BBG,BBGM,BG}, where it is proved that, if a certain curvature dimension inequality and some further assumptions are satisfied, then the above tools are automatically available. So, our main contribution here is to prove that this inequality and assumptions are satisfied in any Riemannian foliation with totally geodesic leaves if the following natural geometric conditions are fulfilled: completeness of the metric, uniform bracket generating condition, global lower bound on the horizontal Ricci curvature and  global upper bound on the torsion of the Bott connection. Proving the curvature dimension estimate under these assumptions is not easy and our analysis relies on new taylor-made Bochner-Weitzenb\"ock type inequalities for a suitable one-parameter family of sub-Laplacians on one-forms.

\

It is now time to give more details on our contribution. In the work \cite{BG} the authors proved that on a sub-Riemannian manifold with transverse symmetries, assuming natural geometric conditions,  the sub-Laplacian satisfies a generalized curvature dimension inequality. Among other things, this curvature  dimension estimate implies Li-Yau inequalities for positive solutions of the heat equation \cite{BBG,BG}, Gaussian lower and upper bounds for the subelliptic heat kernel \cite{BBG,BG}, log-Sobolev and isoperimetric inequalities \cite{BB, BK}, volume and distance comparison estimates \cite{BBGM} and a Bonnet-Myers type theorem \cite{BG}.  Recently, it has been pointed out by Elworthy \cite{Elworthy} that sub-Riemannian manifolds with transverse symmetries can be seen as Riemannian manifolds with bundle like metrics which are  foliated by totally geodesic leaves. The goal of the present work is two-fold:
\begin{itemize}
\item We actually prove that on any Yang-Mills type Riemannian foliation with bundle like metric and  totally geodesic leaves, under natural geometric conditions, the horizontal Laplacian satisfies the curvature dimension estimate introduced in \cite{BG}. As a consequence, all the results proved in \cite{BB,BBG,BBGM, BG, BG2, BK, Kim} apply in this more general case.
\item We  simplify the original approach of \cite{BG} by working out new Weitzenb\"ock type identities for the horizontal Laplacian which we think are interesting in themselves. These Weitzenb\"ock  identities easily imply not only the curvature dimension estimate but also the stochastic completeness of the heat semigroup, which is a crucial ingredient to run the  machinery  developed in \cite{BG}.
\end{itemize}

\

The paper is organized as follows. In Section 2, we give the basic definitions and conventions that will be used throughout the text. In Section 3, we introduce a canonical one parameter family of horizontal Laplacians on one-forms and prove Weitzenb\"ock-Bochner's type inequalities for this family of horizontal Laplacians. In Section 4, we prove the generalized curvature dimension inequality. We point out that, unlike many previous works on Riemannian foliations (see \cite{Tondeur, Tondeur2} and the references therein), our results in Section 4 actually concern the sub-Riemannian geometry associated to the horizontal distribution and are not restricted to the transversal  geometry of the foliation.

\

\textbf{Acknowledgments:} F. Baudoin would like to thank Pr. K. D. Elworthy for stimulating discussions. All the authors thank an anonymous referee for an exceptional level of reading and valuable comments. The paper in this form greatly benefited from those comments.

\section{Preliminaries}

Let $\M$ be a smooth, connected  manifold with dimension $n+m$. We assume that $\bM$ is equipped with a Riemannian foliation $\mathcal{F}$ with bundle-like metric $g$ and totally geodesic  $m$-dimensional leaves (see the classical monograph by Tondeur \cite{Tondeur} for the basic properties of such foliations).  Prototype examples of such foliations arise in the context of Riemannian submersions. If $\pi :\M \to \mathbb{N}$ is a Riemannian submersion whose fibers are totally geodesic in $\M$, then the foliation of $\M$ by the fibers of $\pi$ gives a totally geodesic foliation with bundle-like metric on $\M$. Actually, any totally geodesic foliation with bundle like metric can locally be described by such submersion (see \cite{IHP}).

\

As we will see,  the totally geodesic foliation framework is the good  setting to extend the generalized curvature dimension developed in \cite{BG}. An interesting class of examples of homogeneous spaces that fall in the present framework and not in the one of \cite{BG} is the following. This class of examples encompasses all the generalized Hopf fibrations (see Chapter 9, Section H in \cite{Besse}).

\begin{example}
Let $\mathbf{G}$ be a Lie group, and $\mathbf{H},\mathbf{K}$ be two compact subgroups of $\mathbf{G}$ with $\mathbf{K} \subset \mathbf{H}$. Then, we have a natural fibration given by the coset map
\begin{align*}
\begin{array}{llll}
\pi: & \mathbf{G} / \mathbf{K} & \to & \mathbf{G} / \mathbf{H}  \\
 & \alpha \mathbf{K} & \to & \alpha \mathbf{H} ,
\end{array}
\end{align*}
where the fiber is $\mathbf{H} / \mathbf{K}$. From \cite{Be}, there exist $\mathbf{G}$-invariant metrics on respectively  $\mathbf{G} / \mathbf{K} $ and  $ \mathbf{G} / \mathbf{H} $ that make $\pi$ a Riemannian submersion with totally geodesic fibers isometric to $\mathbf{H} / \mathbf{K}$. It is then easily seen that  $\mathbf{G} / \mathbf{K}$ is endowed with a structure of sub-Riemannian manifold with transverse symmetries in the sense of \cite{BG} if and only if $\mathbf{K}$ is the trivial subgroup $\{ e \}$. In that case, the group of transverse symmetries is isometric to $\mathbf{H}$.
\end{example}

The sub-bundle $\mathcal{V}$ formed by vectors tangent to the leaves is referred  to as the set of \emph{vertical directions}. The sub-bundle $\mathcal{H}$ which is normal to $\mathcal{V}$ is referred to as the set of \emph{horizontal directions}.   The metric $g$ can be split as
\[
g=g_\mathcal{H} \oplus g_{\mathcal{V}},
\]
and for later use, we introduce the one-parameter family of Riemannian metrics:
\[
g_{\varepsilon}=g_\mathcal{H} \oplus  \frac{1}{\varepsilon }g_{\mathcal{V}}, \quad \varepsilon >0,
\]
 which is going to play a pervasive role in the sequel. It is called the canonical variation of $g$, see Chapter 9 G in the monograph by Besse \cite{Besse}.

\

There is a canonical connection on $\M$, the Bott connection, which is given as follows:

\[
\nabla_X Y =
\begin{cases}
\pi_{\mathcal{H}} ( \nabla_X^R Y) , X,Y \in \Gamma^\infty(\mathcal{H}) \\
\pi_{\mathcal{H}} ( [X,Y]),  X \in \Gamma^\infty(\mathcal{V}), Y \in \Gamma^\infty(\mathcal{H}) \\
\pi_{\mathcal{V}} ( [X,Y]),  X \in \Gamma^\infty(\mathcal{H}), Y \in \Gamma^\infty(\mathcal{V}) \\
\pi_{\mathcal{V}} ( \nabla_X^R Y) , X,Y \in \Gamma^\infty(\mathcal{V})
\end{cases}
\]
where $\nabla^R$ is the Levi-Civita connection and $\pi_\mathcal{H}$ (resp. $\pi_\mathcal{V}$) the projection on $\mathcal{H}$ (resp. $\mathcal{V}$). It is easy to check that for every $\varepsilon >0$, this connection satisfies $\nabla g_\varepsilon=0$.

\

For local computations, it will be convenient to work in local frames that are adapted to the geometry of the foliation.

\begin{lemma}\label{frame}
Let $x \in \M$. Around $x$, there exist a local orthonormal horizontal frame $\{X_1,\cdots,X_n \}$ and a local orthonormal vertical frame $\{Z_1,\cdots,Z_m \}$ such that the following structure relations hold
\[
[X_i,X_j]=\sum_{k=1}^n \omega_{ij}^k X_k +\sum_{k=1}^m \gamma_{ij}^k Z_k
\]
\[
[X_i,Z_k]=\sum_{j=1}^m \beta_{ik}^j Z_j,
\]
where $\omega_{ij}^k,  \gamma_{ij}^k,  \beta_{ik}^j $ are smooth functions such that:
\[
 \beta_{ik}^j=- \beta_{ij}^k.
\]
Moreover, at $x$, we have
\[
 \omega_{ij}^k=0,  \beta_{ij}^k=0.
\]
\end{lemma}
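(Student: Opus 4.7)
The plan is to build the horizontal and vertical parts of the frame separately: horizontal vectors come from lifting a Riemannian-normal frame on a local submersive quotient, and vertical vectors are obtained by Bott-parallel transport from $x$.

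\emph{Step 1 (horizontal frame).} Because $\mathcal F$ is a Riemannian foliation with bundle-like metric and totally geodesic leaves, a classical local structure theorem (see Tondeur \cite{Tondeur}) provides a neighborhood $U$ of $x$ and a Riemannian submersion $\pi:U\to V$ with totally geodesic fibers whose fibers are the local leaves. Set $\bar x=\pi(x)$ and choose on $V$ a Levi-Civita normal orthonormal frame $\{\bar X_1,\dots,\bar X_n\}$ at $\bar x$, so $\bar\nabla_{\bar X_i}\bar X_j(\bar x)=0$. Lift each $\bar X_i$ to its basic horizontal field $X_i$ on $U$. Bundle-likeness forces $\{X_1,\dots,X_n\}$ to be orthonormal, and $\pi$-relatedness forces $[X_i,Z]\in\Gamma^\infty(\mathcal V)$ for every vertical $Z$ (the flow of a basic field sends fibers to fibers). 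Decomposing $[X_i,X_j]$ along $\mathcal H\oplus\mathcal V$ then yields the first structure relation with functions $\omega_{ij}^k,\gamma_{ij}^k$. Since $X_i,X_j$ are $\pi$-related to $\bar X_i,\bar X_j$, one has $d\pi([X_i,X_j])=[\bar X_i,\bar X_j]\circ\pi$; combined with $[\bar X_i,\bar X_j](\bar x)=\bar\nabla_{\bar X_i}\bar X_j(\bar x)-\bar\nabla_{\bar X_j}\bar X_i(\bar x)=0$, the horizontal component of $[X_i,X_j](x)$ vanishes, i.e.\ $\omega_{ij}^k(x)=0$.

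\emph{Step 2 (vertical frame and vanishing of $\beta$).} Fix an orthonormal basis $(f_1,\dots,f_m)$ of $\mathcal V_x$ and extend it to a local vertical frame $\{Z_1,\dots,Z_m\}$ by Bott-parallel transport along radial curves in a chart centered at $x$. Since $\nabla g=0$ and the Bott connection preserves the splitting $\mathcal H\oplus\mathcal V$, the resulting $Z_j$ are orthonormal and vertical, and by construction $\nabla_V Z_j(x)=0$ for every $V\in T_x\M$. As $[X_i,Z_j]$ is already vertical, the definition of $\nabla$ reads $\nabla_{X_i}Z_j=\pi_{\mathcal V}([X_i,Z_j])=[X_i,Z_j]=\sum_k\beta_{ij}^k Z_k$, so $\beta_{ij}^k(x)=0$. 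The skew-symmetry $\beta_{ik}^j=-\beta_{ij}^k$ is obtained from $X_i\cdot g(Z_k,Z_j)=0$: expanding via $\nabla^R$ and using totally-geodesicness of the leaves to get $g(\nabla^R_{Z_k}X_i,Z_j)=-g(X_i,\nabla^R_{Z_k}Z_j)=0$, one reduces $\beta_{ik}^j=g([X_i,Z_k],Z_j)$ to $g(\nabla^R_{X_i}Z_k,Z_j)$, and metric-compatibility of $\nabla^R$ then gives $g(\nabla^R_{X_i}Z_k,Z_j)=-g(\nabla^R_{X_i}Z_j,Z_k)=-\beta_{ij}^k$.

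The only step with substantive content is invoking the local submersion description in Step 1; once it is in place, everything else is adapted-frame bookkeeping. The point that needs the most care is that $\omega_{ij}^k(x)=0$ hinges on the fact that the $X_i$ are lifts of a normal frame on the base (not an arbitrary orthonormal horizontal frame on $U$), and that $\beta_{ij}^k(x)=0$ depends on the specific Bott-parallel construction of the $Z_j$, not on any orthonormal vertical frame.
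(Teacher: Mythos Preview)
Your proof is correct. The horizontal construction in Step~1 is essentially the same as the paper's (basic lifts of a normal frame on the local quotient), though you make the mechanism for $\omega_{ij}^k(x)=0$ more explicit by pushing the bracket down to the base.

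The vertical construction and the skew-symmetry argument, however, differ from the paper in interesting ways. For $\beta_{ij}^k(x)=0$, the paper works in Bott-exponential coordinates $\exp_x(\sum x_i u_i+\sum z_j v_j)$, uses the symmetrization identity $\nabla_{\partial_{x_i}}\partial_{z_j}+\nabla_{\partial_{z_j}}\partial_{x_i}=0$ together with the vanishing of the mixed torsion $T(\partial_{x_i},\partial_{z_j})$ at $x$ to conclude $\nabla_{\partial_{x_i}}\partial_{z_j}(x)=0$, and then projects the $\partial_{z_j}$ onto $\mathcal V$ and Gram--Schmidts. Your radial Bott-parallel transport gives the same conclusion in one stroke: since $\nabla$ preserves $\mathcal V$ and $g$, the transported frame is automatically vertical and orthonormal, and $\nabla_V Z_j(x)=0$ holds by the standard ``normal frame via radial parallel transport'' mechanism, with no projection or orthonormalization needed. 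This is cleaner. For the skew-symmetry $\beta_{ik}^j=-\beta_{ij}^k$, the paper invokes the fact (citing Besse) that the flow of a basic field is a fiberwise isometry; your direct computation via $\nabla^R$-compatibility and the totally geodesic condition $g(X_i,\nabla^R_{Z_k}Z_j)=0$ unwinds exactly what that isometry statement encodes, and is more self-contained. One small point worth stating explicitly in your write-up: the smoothness of the radially transported frame at $x$ and the fact that $\nabla_V Z_j(x)=0$ for \emph{all} $V$ (not just radial directions) both follow from smooth dependence of the transport ODE on the endpoint, but a sentence to that effect would make ``by construction'' airtight.
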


\begin{proof}
Since the statement is local, we can assume that the Riemannian foliation comes from  a Riemannian submersion with totally geodesic fibers. We fix  $x \in \M$ throughout the proof.

Let $X_1,\dots, X_n$ be a local orthonormal horizontal frame around $x$ consisting of basic vector fields for the submersion. We can assume that, at $x$, $\nabla_{X_i} X_j =0$. Let now $Z_1,\dots,Z_m$ be any local orthonormal vertical frame around $x$. Since $X_i$ is basic, the vector field $[X_i,Z_m]$ is tangent to the leaves. We  write the structure constants in that local frame:
\[
[X_i,X_j]=\sum_{k=1}^n \omega_{ij}^k X_k +\sum_{k=1}^m \gamma_{ij}^k Z_k
\]
\[
[X_i,Z_k]=\sum_{j=1}^m \beta_{ik}^j Z_j,
\]
and observe that at the center $x$ of the frame, we have $\omega_{ij}^k =0$. Moreover, since $X_i$ is basic and the submersion has totally geodesic fibers, the flow generated by $X_i$ induces an isometry between the leaves (see Besse \cite{Besse}, Chapter 9), as a consequence we have the skew-symmetry,
\[
\beta_{ik}^j =-\beta_{ij}^k.
\]
Also for later use, we record the fact that in this frame the Christofell symbols of the Bott connection are given by
\begin{align*}
\begin{cases}
\nabla_{X_i} X_j =\frac{1}{2} \sum_{k=1}^n \left( \omega_{ij}^k +\omega_{ki}^j+\omega_{kj}^i\right)X_k \\
\nabla_{Z_j} X_i =0 \\
\nabla_{X_i} Z_j=\sum_{k=1}^m \beta_{ij}^{k} Z_k
\end{cases}
\end{align*}
It remains to prove that we can also assume that, at the center $x$, $\beta_{ij}^k=0$.  It is enough to prove that there exists a local orthonormal vertical frame  $Z_1,\cdots,Z_m$ such that for any horizontal field $X$, we have at $x$, $\nabla_X Z_i =0$. To this end, we use an argument inspired by  \cite{Hladky}, Corollary 2.22.

Let $u_1,\cdots,u_n$ be an orthonormal frame of $\mathcal{H}_x$ and $v_1,\cdots,v_m$ be an orthonormal frame of $\mathcal{V}_x$. Let us denote by $x_1,\cdots,x_n, z_1, \cdots , z_m$ the coordinates near $x$ obtained from the exponential map $\exp_x (\sum_{i=1}^n x_i u_i +\sum_{i=1}^m z_i v_i)$ of the Bott connection $\nabla$. We have at $x$,
\[
\nabla_{\partial_{x_i} } \partial_{z_j} +\nabla_{ \partial_{z_j} } \partial_{x_i} =0.
\]
Since at $x$, the torsion $T(\partial_{x_i} , \partial_{z_j})$ is zero because $u_i$ is horizontal and $v_j$ vertical, we deduce that at $x$
\[
\nabla_{\partial_{x_i} } \partial_{z_j} =\nabla_{ \partial_{z_j} } \partial_{x_i} =0.
\]
We now define $Z_k$ to be the orthogonal projection of $\partial_{z_k}$ onto the vertical bundle. The $Z_k$'s form a local vertical frame around $x$. If $X$ is any smooth vector field around $x$, we know that the verticality of $Z_k$ implies on one hand that $\nabla_X Z_k$ is vertical. On the other hand, we can write
\[
\nabla_X Z_k=\nabla_X (Z_k-\partial_{z_k})+ \nabla_X \partial_{z_k}
\]
The vector $\nabla_X (Z_k-\partial_{z_k})$ is horizontal and, at $x$, $ \nabla_X \partial_{z_k}=0$, thus at $x$,  $\nabla_X Z_k$ is also horizontal. This implies $\nabla_X Z_k =0$, at $x$. The Gram-Schmidt orthonormalization of $Z_1,\cdots,Z_m$ gives then the expected local vertical frame.
\end{proof}

We define the horizontal gradient $\nabla_\mathcal{H} f$ of a function $f$ as the projection of the Riemannian gradient of $f$ on the horizontal bundle. Similarly, we define the vertical gradient $\nabla_\mathcal{V} f$ of a function $f$ as the projection of the Riemannian gradient of $f$ on the vertical bundle.
The horizontal Laplacian $L$ is the generator of the symmetric Dirichlet form:
\[
\mathcal{E}_{\mathcal{H}} (f,g) =-\int_\bM \langle \nabla_\mathcal{H} f , \nabla_\mathcal{H} g \rangle_{\mathcal{H}} d\mu,
\]
where $\mu$ is the Riemannian volume. It is a diffusion operator $L$ on $\bM$ which is symmetric  on $C^\infty_0 (\bM)$ with respect  to the volume measure $\mu$.  A routine computation shows that in the above local frame,
\[
L=\sum_{i=1}^n \nabla_{X_i}\nabla_{X_i} -\nabla_{\nabla_{X_i} X_i}.
\]
It should be noted that if we assume the horizontal distribution to be bracket-generating (which we do not until Section 4), then from H\"ormander's theorem $L$ is a hypoelliptic diffusion operator and $(\M, \Ho, g_\Ho)$ is a sub-Riemannian structure. We mention that in all generality,  even in the two-step generating case, the Popp measure of this sub-Riemannian structure  which appears as a natural volume form  on the manifold (see \cite{BR}), does not need to coincide with a constant multiple of the Riemannian volume measure. This is however the case in some interesting examples like Carnot groups of step 2 or CR Sasakian manifolds. We also mention that while the horizontal gradient is intrinsically associated to the sub-Riemannian structure $(\M, \Ho, g_\Ho)$, the vertical gradient is not: It comes from the additional choice of the vertical complement $\V$ to the horizontal distribution.

\

We now introduce some tensors that will play an important role in the sequel.

\

For $Z \in \Gamma^\infty(\V)$, there is a  unique skew-symmetric endomorphism  $J_Z:\mathcal{H}_x \to \mathcal{H}_x$ such that for all horizontal vector fields $X$ and $Y$,
\begin{align}\label{Jmap}
g_\mathcal{H} (J_Z (X),Y)= g_\mathcal{V} (Z,T(X,Y)).
\end{align}
where $T$ is the torsion tensor of $\nabla$. We then extend $J_{Z}$ to be 0 on  $\mathcal{V}_x$. Also, if $Z\in \Gamma^\infty (\ch(\M))$, from \eqref{Jmap} we set $J_Z=0$. If $Z_1,\dots,Z_m$ is a local vertical frame, the operator $\sum_{\ee=1}^m J_{Z_\ee}J_{Z_\ee}$ does not depend on the choice of the frame and shall concisely be denoted by $\mathbf{J}^2$. For instance, if $\M$ is a K-contact manifold equipped with the Reeb foliation, then $\mathbf{J}$ is an almost complex structure, $\mathbf{J}^2=-\mathbf{Id}_{\mathcal{H}}$.

\

 The horizontal divergence of the torsion $T$ is the $(1,1)$ tensor  which is defined in a local horizontal frame $X_1,\dots,X_n$ by
\[
\delta_\mathcal{H} T (X)= -\sum_{j=1}^n(\nabla_{X_j} T) (X_j,X).
\]
The $g$-adjoint of $\delta_\mathcal{H}T$ will be denoted $\delta^*_\mathcal{H} T$.
\

 By declaring a one-form to be horizontal (resp. vertical) if it vanishes on the vertical bundle $\mathcal{V}$ (resp. on the horizontal bundle $\mathcal{H}$), the splitting of the tangent space
 \[
 T_x \bM= \mathcal{H}_x \oplus \mathcal{V}_x
 \]
 gives a splitting of the cotangent space.


 The metric $g_\varepsilon$ induces  then a metric on the cotangent bundle which we still denote $g_\varepsilon$. By using similar notations and conventions as before we define pointwisely for every $\eta$ in $T^*_x \M$,
\[
\| \eta \|^2_{\varepsilon} =\| \eta \|_\mathcal{H}^2+\varepsilon \| \eta \|_\mathcal{V}^2.
\]

\

By using the duality given by the metric $g$, $(1,1)$ tensors can also be seen as linear maps on the cotangent bundle $T^* \M$. More precisely, if $A: \Gamma^\infty(T\M)\to \Gamma^\infty(T\M)$ is a $(1,1)$ tensor, we will still denote by $A$ the fiberwise linear map on the cotangent bundle which is defined as the $g$-adjoint of the dual map of $A$. Namely $A:\Gamma^\infty(T^*\M)\to \Gamma^\infty(T^*\M)$ is such that for any $\eta, \xi\in \Gamma(T^*\M)$, $\langle A\eta,\xi\rangle=\xi(A\eta^\sharp)$ where $\sharp$ is the standard musical isomorphism. The same convention will be made for any $(r,s)$ tensor. As a convention, unless explicitly mentioned otherwise in the text, the inner product duality will always be understood with respect to the reference metric $g$ (and not $g_\varepsilon$).

\

We define then the horizontal Ricci curvature $\mathfrak{Ric}_{\mathcal{H}}$ as the fiberwise symmetric linear map on one-forms such that for every smooth functions $f,g$,
\[
\langle  \mathfrak{Ric}_{\mathcal{H}} (df), dg \rangle=\mathbf{Ricci} (\nabla_\mathcal{H} f ,\nabla_\mathcal{H} g),
\]
where $\mathbf{Ricci}$ is the Ricci curvature of the connection $\nabla$.

\

If $V$ is a horizontal vector field and $\varepsilon >0$, we consider the fiberwise linear map from the space of one-forms into itself which is given for $\eta \in \Gamma^\infty(T^* \M)$ and $Y \in  \Gamma^\infty(T \M)$ by
\[
\mathfrak{T}^\varepsilon_V \eta (Y) =
\begin{cases}
\frac{1}{\varepsilon} \eta (J_Y V), \quad Y \in \Gamma^\infty(\mathcal{V}) \\
-\eta (T(V,Y)), Y  \in \Gamma^\infty(\mathcal{H})
\end{cases}
\]
and $\mathfrak{T}^\varepsilon_V=0$ when $V$ is a vertical vector field.
We observe that $\mathfrak{T}^\varepsilon_V$ is skew-symmetric for the metric $g_\varepsilon$ so that $\nabla -\mathfrak{T}^\varepsilon$ is a $g_\varepsilon$-metric connection.

\
If $Z_1,\dots,Z_m$ is a local vertical frame of the leaves as above, we denote
\[
\J(\eta)=-\sum_{\ee=1}^mJ_{Z_\ee}(\iota_{Z_\ee}d\eta_\V),
\]
 where $\eta_\V$ is the the projection of $\eta$ to the vertical cotangent bundle and $\iota$ the usual  interior product. Of course, $\J$ does not depend on the choice of the frame.

\

If $\eta$ is a one-form, we define the horizontal gradient in a local adapted frame of $\eta$ as the $(0,2)$ tensor
\[
\nabla_\mathcal{H} \eta =\sum_{i=1}^n \nabla_{X_i} \eta \otimes \theta_i.
\]
where $\theta_i, i=1,\dots, n$ is the dual of $X_i$.
We denote by $\nabla_\ch^\# \eta$ the symmetrization of $\nabla_\mathcal{H} \eta $.

\

Similarly, we will use the notation
\[
\mathfrak{T}^\varepsilon_\mathcal{H} \eta =\sum_{i=1}^n \mathfrak{T}^\varepsilon_{X_i} \eta  \otimes \theta_i.
\]

Finally. we will still denote by $L$ the covariant extension on one-forms of the horizontal Laplacian 
\[
L=-\nabla_\ch^*\nabla_\ch.
\]
\section{Bochner-Weitzenb\"ock formulas for the horizontal Laplacian}

For $\varepsilon >0$, we consider the following operator which is defined on one-forms by
\begin{align}\label{def}
\square_\varepsilon=-(\nabla_\mathcal{H} -\mathfrak{T}_\mathcal{H}^\varepsilon)^* (\nabla_\mathcal{H} -\mathfrak{T}_\mathcal{H}^\varepsilon)-\frac{1}{ \varepsilon}\mathbf{J}^2+\frac{1}{\varepsilon} \delta_\mathcal{H} T- \mathfrak{Ric}_{\mathcal{H}},
\end{align}
where the adjoint is understood with respect to the $(L^2,g_\varepsilon)$ product on sections, i.e. $\int_\M \langle\cdot,\cdot\rangle_{\varepsilon}d\mu$. It is easily seen that, in the local  frame,
\begin{align}\label{eq-L-form}
-(\nabla_\mathcal{H} -\mathfrak{T}_\mathcal{H}^\varepsilon)^* (\nabla_\mathcal{H} -\mathfrak{T}_\mathcal{H}^\varepsilon)
=\sum_{i=1}^n (\nabla_{X_i} -\mathfrak{T}^\varepsilon_{X_i})^2 - ( \nabla_{\nabla_{X_i} X_i}-  \mathfrak{T}^\varepsilon_{\nabla_{X_i} X_i}),
\end{align}

Before we  proceed to study the main geometric properties of $\square_\varepsilon$, it may be useful to explain where  this operator comes from. One of the main motivations of the present work was to explain, in the correct geometric setting, the Bochner's identities of \cite{BG}. More precisely,  its is well known that  the celebrated Bochner's formula on Riemannian manifolds is the consequence of the Weitzenb\"ock formula on one-forms, and we wanted to check if this is also the case in the framework of \cite{BG} and in the more general framework of foliations. This motivation leads to study second order  operators $\mathfrak{G}$ on one-forms such that for $f \in C_0^\infty(\M)$,
\[
dLf=\mathfrak{G} d f,
\]
where $d$ is the usual exterior derivative. Obviously there are infinitely many operators $\mathfrak{G}$ that satisfy the above intertwining, and as we will see $\square_\varepsilon$ is only one of them. It is however remarkable among such $\mathfrak{G}$'s in at least two ways:

\begin{itemize}
\item It is first remarkable that it can be written in the form $\square_\varepsilon=-(\nabla^\varepsilon)^*_\Ho \nabla_\Ho^\varepsilon +\mathcal{R}$, where $\nabla^\varepsilon$ is a connection which is metric for $g_\varepsilon$ and where $\mathcal{R}$ is $0$-th order term. This special form allows to write a Bochner's inequality for the metric $g_\varepsilon$ and use the stochastic  method to prove stochastic completeness of the semigroup generated by $L$: We refer to  section 4 for a further explanation of this fact.
\item Sending the parameter $\varepsilon$ to $\infty$, we get an operator denoted $\square_\infty$. If the Riemannian foliation comes from a Riemannian submersion, then  $\square_\infty$ can be interpreted as a lift of the Hodge-de Rham Laplacian of the base space of the submersion. Then, similarly to Proposition 3.2 in \cite{B}, it is easy to check that for any fiberwise linear map $\Lambda$ from the space of two-forms into the space of one-forms, and any $x \in \M$, we have
\begin{align*}
 & \inf_{\eta, \| \eta (x) \|_{\varepsilon}=1} \left(  \frac{1}{2} (L \| \eta \|_\varepsilon^2)(x) -\langle (\square_\infty +\Lambda \circ d )\eta (x) , \eta (x)\rangle_\varepsilon \right)  \\
 \le & \inf_{\eta, \| \eta (x) \|_{\varepsilon}=1} \left(  \frac{1}{2} (L \| \eta \|_\varepsilon^2)(x) -\left\langle \left(\square_\infty -\frac{1}{\varepsilon} \mathcal{T} \circ d\right) \eta (x) , \eta (x) \right\rangle_\varepsilon \right),
\end{align*}
where in the above notation, the torsion tensor $T$ is interpreted, by duality, as a fiberwise linear map from the space of two-forms into the space of one-forms. In Lemma \ref{lemma-sq-ep}, we will prove that $\square_\infty -\frac{1}{\varepsilon}\mathcal{T} \circ d= \square_{\varepsilon /2}$. As a consequence $\square_{\varepsilon /2}$ can be identified with the operator $\mathfrak{G}$  which satisfies  $dL=\mathfrak{G} d$, has the same principal symbol as $-\nabla^*_\Ho \nabla_\Ho$ and which has the \textit{largest} Weitzenb\"ock type curvature for the metric $g_\varepsilon$. Let us note that it is however more convenient to work with $\square_{\varepsilon }$ instead of $\square_{\varepsilon /2}$ because of the previous point; this does not change anyting in terms of curvature quantities and optimal constants.
\end{itemize}

\

The following theorem is the main result of the section

\begin{theorem}\label{Bochner}
Let $f \in C^\infty(\M)$, we have
\[
dLf=\square_\varepsilon df,
\]
and for    $\eta =df $, the following Bochner's inequality holds
\begin{align*}
& \frac{1}{2} L \| \eta \|_{\varepsilon}^2 -\langle \square_\varepsilon \eta , \eta \rangle_{\varepsilon} \\
 = &  \| \nabla_{\mathcal{H}} \eta  -\mathfrak{T}^\varepsilon_{\mathcal{H}} \eta \|_{\varepsilon}^2 + \left\langle \mathfrak{Ric}_{\mathcal{H}} (\eta), \eta \right\rangle_\mathcal{H} -\left \langle \delta_\mathcal{H} T (\eta) , \eta \right\rangle_\mathcal{V} +\frac{1}{\varepsilon} \langle \mathbf{J}^2 (\eta) , \eta \rangle_\mathcal{H} \\
   \ge & \frac{1}{n}\left( \mathbf{Tr}_\mathcal{H}  \nabla_\ch^\# \eta \right)^2 -\frac{1}{4} \mathbf{Tr}_\mathcal{H} (J^2_{\eta})+ \left\langle \mathfrak{Ric}_{\mathcal{H}} (\eta), \eta \right\rangle_\mathcal{H} -\left \langle \delta_\mathcal{H} T (\eta) , \eta \right\rangle_\mathcal{V} +\frac{1}{\varepsilon} \langle \mathbf{J}^2 (\eta) , \eta \rangle_\mathcal{H}
\end{align*}
\end{theorem}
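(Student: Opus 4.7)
My plan is to work pointwise at $x \in \M$ in the adapted frame of Lemma \ref{frame}, where $\omega_{ij}^k = \beta_{ij}^k = 0$ at $x$, so that $\nabla_{X_i}X_j|_x = \nabla_{X_i}Z_j|_x = 0$. The three assertions will be treated in order, with the key structural fact that $\widetilde{\nabla} := \nabla - \mathfrak{T}^\varepsilon$ is a $g_\varepsilon$-metric connection (noted just after the definition of $\mathfrak{T}^\varepsilon$) serving as the lever.

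For the intertwining $dLf = \square_\varepsilon df$, I evaluate $(dLf)(Y)$ at $x$ for $Y$ in the frame by commuting $Y$ past $\sum_i X_i X_i$ (using $Lf = \sum_i X_i^2 f$ at $x$) and expanding the double commutators $[X_i,[X_i,Y]]f$ via the structure relations. The principal contribution reassembles into $\sum_i (\nabla_{X_i} - \mathfrak{T}^\varepsilon_{X_i})^2 df$ applied to $Y$ once one recognizes how the structure constants $\gamma_{ij}^k$ encode the $J$-map (for $Y$ horizontal) and how the skew-symmetry $\beta_{ik}^j = -\beta_{ij}^k$ reproduces the vertical piece of $\mathfrak{T}^\varepsilon$ with its $\varepsilon^{-1}$ weight (for $Y$ vertical). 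The zeroth-order remainder then splits by origin: the curvature of the Bott connection on $df$ yields $-\mathfrak{Ric}_{\mathcal{H}} df$; covariant derivatives of $T$ yield $\tfrac{1}{\varepsilon}\delta_{\mathcal{H}}T\,df$; and the two-fold algebraic contraction of $J_Z$-terms yields $-\tfrac{1}{\varepsilon}\mathbf{J}^2 df$. Matching these is the main technical obstacle, and is exactly what the engineered definition of $\square_\varepsilon$ is designed to handle.

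For the Bochner equality, metric compatibility of $\widetilde{\nabla}$ with $g_\varepsilon$ yields at $x$
\begin{align*}
\tfrac{1}{2} L \|\eta\|_\varepsilon^2 = \sum_i \langle \widetilde{\nabla}_{X_i}^2 \eta, \eta\rangle_\varepsilon + \sum_i \|\widetilde{\nabla}_{X_i}\eta\|_\varepsilon^2,
\end{align*}
since the $\widetilde{\nabla}_{\nabla_{X_i}X_i}$ correction in \eqref{eq-L-form} vanishes at $x$. This identifies the right-hand side with $\langle -(\nabla_{\mathcal{H}}-\mathfrak{T}^\varepsilon_{\mathcal{H}})^*(\nabla_{\mathcal{H}} - \mathfrak{T}^\varepsilon_{\mathcal{H}})\eta, \eta\rangle_\varepsilon + \|\nabla_{\mathcal{H}}\eta - \mathfrak{T}^\varepsilon_{\mathcal{H}}\eta\|_\varepsilon^2$, and subtracting $\langle \square_\varepsilon \eta, \eta\rangle_\varepsilon$ exposes the residual $\langle (\tfrac{1}{\varepsilon}\mathbf{J}^2 - \tfrac{1}{\varepsilon}\delta_{\mathcal{H}}T + \mathfrak{Ric}_{\mathcal{H}})\eta, \eta\rangle_\varepsilon$. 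Since $\mathbf{J}^2$ and $\mathfrak{Ric}_{\mathcal{H}}$ have horizontal image while $\delta_{\mathcal{H}}T$ has vertical image, these three $g_\varepsilon$-pairings collapse to $\tfrac{1}{\varepsilon}\langle \mathbf{J}^2\eta,\eta\rangle_{\mathcal{H}}$, $\langle \delta_{\mathcal{H}}T\eta,\eta\rangle_{\mathcal{V}}$ (the explicit $\varepsilon^{-1}$ absorbing the $\varepsilon$-factor of $\|\cdot\|_\varepsilon^2$ on vertical forms), and $\langle \mathfrak{Ric}_{\mathcal{H}}\eta,\eta\rangle_{\mathcal{H}}$, producing the stated equality.

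For the inequality I specialize to $\eta = df$. The general identity $d\eta(X,Y) = (\nabla_X\eta)(Y) - (\nabla_Y\eta)(X) + \eta(T(X,Y))$, valid for any affine connection, combined with $d^2 f = 0$, yields $(\nabla_{X_i}\eta)(X_j) - (\nabla_{X_j}\eta)(X_i) = -\eta(T(X_i,X_j))$. Setting $\Phi_{ij} := ((\nabla_{X_i} - \mathfrak{T}^\varepsilon_{X_i})\eta)(X_j) = (\nabla_{X_i}\eta)(X_j) + \eta(T(X_i,X_j))$ for the horizontal-horizontal block, a direct computation shows that the symmetric part of $\Phi$ agrees with that of $\nabla_{\mathcal{H}}\eta$, with $\operatorname{tr}\Phi = \mathbf{Tr}_{\mathcal{H}}\nabla_{\mathcal{H}}^{\#}\eta$ (the $\mathfrak{T}^\varepsilon$ contribution is traceless by skew-symmetry of $T$), while its antisymmetric part equals $\tfrac{1}{2}\eta(T(X_i,X_j)) = \tfrac{1}{2} g_{\mathcal{H}}(J_{\eta_{\mathcal{V}}^{\#}}X_i, X_j)$. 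Cauchy-Schwarz gives $\|\Phi_{\mathrm{sym}}\|^2 \geq \tfrac{1}{n}(\operatorname{tr}\Phi)^2$, and the skew-symmetry of each $J_Z$ gives $\|\Phi_{\mathrm{asym}}\|^2 = -\tfrac{1}{4}\mathbf{Tr}_{\mathcal{H}}(J_\eta^2)$. Dropping the nonnegative horizontal-vertical contributions to $\|\nabla_{\mathcal{H}}\eta - \mathfrak{T}^\varepsilon_{\mathcal{H}}\eta\|_\varepsilon^2$ yields the claimed lower bound.
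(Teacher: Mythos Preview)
Your proposal is correct, and the argument for the Bochner \emph{equality} (Part~2) is actually more conceptual than the paper's. The paper establishes Lemma~\ref{thm-cd-sq} by an explicit coordinate computation (expanding both sides in the adapted frame and matching terms); you instead invoke directly that $\widetilde\nabla = \nabla - \mathfrak T^\varepsilon$ is $g_\varepsilon$-metric, which gives $\tfrac12 L\|\eta\|_\varepsilon^2 = \langle \widetilde\nabla_{\mathcal H}^*\widetilde\nabla_{\mathcal H}^{\phantom*}\eta,\eta\rangle_\varepsilon + \|\widetilde\nabla_{\mathcal H}\eta\|_\varepsilon^2$ in one line, after which the zeroth-order terms of $\square_\varepsilon$ are read off with the correct $\mathcal H/\mathcal V$ pairings. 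This is cleaner and buys you the identity for \emph{all} one-forms without any further work. Your treatment of the inequality (Part~3) is essentially the paper's Proposition~\ref{prop-ineq}, phrased intrinsically via $d\eta(X,Y) = (\nabla_X\eta)(Y) - (\nabla_Y\eta)(X) + \eta(T(X,Y))$ rather than in coordinates.

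For the intertwining (Part~1) the paper takes a different and somewhat more transparent route: it first introduces the $\varepsilon$-independent operator $\square_\infty = L + 2\mathfrak J - \mathfrak{Ric}_{\mathcal H} + \delta^*_{\mathcal H}T$, proves $dL = \square_\infty d$ by a commutator computation (Lemma~\ref{lemma-sq-infty}), then shows algebraically that $\square_\varepsilon = \square_\infty - \tfrac{2}{\varepsilon}\mathcal T\circ d$ (Lemma~\ref{lemma-sq-ep}), so that $\square_\varepsilon\, df = \square_\infty\, df$ since $d^2f = 0$. This factorization makes the $\varepsilon$-independence of $\square_\varepsilon\, df$ manifest, whereas in your direct approach the cancellation of the $\varepsilon$-dependent pieces coming from $\mathfrak T^\varepsilon$, $\mathbf J^2$, and $\delta_{\mathcal H}T$ has to emerge from the bookkeeping. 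Your plan works, but it is worth noting that $Lf = \sum_i X_i^2 f$ holds only \emph{at} $x$, so applying $Y$ also produces a contribution from $Y(\nabla_{X_i}X_i)$; this is precisely one source of the Ricci term and should be tracked explicitly when you carry out the computation.
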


\

The remainder of the section is devoted to the proof of this result.

\

Since the result is local, we can assume that the foliation comes from a totally geodesic submersion and work in the frame $\{X_1,\dots,X_n, Z_1,\dots,Z_m\}$  of Lemma \ref{frame}.
The dual coframe of $\{X_1,\dots,X_n, Z_1,\dots,Z_m\}$ will be denoted $\{ \theta_1,\dots, \theta_n, \nu_1,\dots,\nu_m \}$ and a generic  one-form $\eta$ will be written, $\eta=\sum_{i=1}^n f_i \theta_i+\sum_{\ee=1}^m g_\ee \nu_\ee$.

\

\begin{lemma}\label{local-compute}
At $x$,
\begin{itemize}
\item $\mathbf{Ricci} (X_i,X_k)= \sum_{j=1}^n\left( \frac{1}{2} X_j (\omega_{ik}^j -\omega_{ij}^k-\omega_{kj}^i  ) -X_i\omega_{jk}^j \right)$.
\item $\mathbf{Ricci} (Z_\ee,X_k)= -\sum_{j=1}^n Z_\ee \omega_{jk}^j= 0 $.            
\item $\J(\eta)=\sum_{i,j=1}^n\sum_{\ee=1}^m\gamma_{ji}^\ee(X_ig_\ee)\theta_j$.
\item $\delta_\ch T(\eta)=-\sum_{i,j=1}^n\sum_{\ee=1}^m\left(X_i\gamma_{ij}^\ee \right)f_j\nu_\ee.$
\item  $\label{eq-delta-ch}
\delta^*_\ch T(\eta)=\sum_{i,j=1}^n\sum_{k=1}^m\left(X_j\gamma_{ij}^k\right)g_k\theta_i.$
\item $\mathfrak{T}^\varepsilon_{X_i}\eta=\sum_{j=1}^n\sum_{\ee=1}^m\left(\gamma_{ij}^\ee g_\ee \theta_j
-\frac{1}{\varepsilon}\gamma_{ij}^\ee f_j\nu_\ee\right)$
\item $\langle\mathbf{J}^2 (\eta) , \eta \rangle_\mathcal{H}=- \sum_{\ee=1}^m \sum_{i=1}^n\left(\sum_{j=1}^n\gamma_{ij}^\ee f_j\right)^2$
\end{itemize}
\end{lemma}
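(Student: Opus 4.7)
The plan is to carry out all seven identities as local computations in the adapted frame of Lemma \ref{frame}, evaluating everything at the distinguished point $x$. At $x$ one has $\omega_{ij}^k = 0$ and $\beta_{ij}^k = 0$, together with the vanishing of all Christoffel symbols of $\nabla$, namely $\nabla_{X_i}X_j|_x = 0$, $\nabla_{X_i}Z_j|_x = 0$, and $\nabla_{Z_j}X_i = 0$ identically. A useful preliminary, obtained by plugging the Christoffel formulas into $T(X,Y) = \nabla_X Y - \nabla_Y X - [X,Y]$, is $T(X_i,X_j) = -\sum_\ell \gamma_{ij}^\ell Z_\ell$ and $T(X_i,Z_\ell) = 0$ globally. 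The defining relation for $J_Z$ then gives $J_{Z_\ell}(X_i) = -\sum_j\gamma_{ij}^\ell X_j$, and under the duality convention $\langle A\eta,\xi\rangle = \xi(A\eta^\sharp)$ one also has $J_{Z_\ell}(\theta_i) = -\sum_j\gamma_{ij}^\ell\theta_j$.

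For the two Ricci identities I would expand $R(X_j,X_i)X_k = \nabla_{X_j}\nabla_{X_i}X_k - \nabla_{X_i}\nabla_{X_j}X_k - \nabla_{[X_j,X_i]}X_k$ and differentiate $\nabla_{X_i}X_k = \frac{1}{2}\sum_l(\omega_{ik}^l + \omega_{li}^k + \omega_{lk}^i)X_l$: at $x$ only the derivative hitting the $\omega$ coefficient survives, all other contributions carrying an explicit factor of $\omega$ or $\beta$. Because the Bott connection preserves the splitting $T\bM = \ch \oplus \V$, the curvature $R$ is block diagonal and only horizontal traces contribute. Simplifying with $\omega_{jj}^k = 0$ and the skew-symmetries of $\omega$ in the upper/lower indices yields the first identity. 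For $\mathbf{Ricci}(Z_\ell,X_k)$ only the $-\nabla_{Z_\ell}\nabla_{X_j}X_k$ piece contributes (using $\nabla_{Z_\ell}X_k \equiv 0$ and $[X_j,Z_\ell]|_x = 0$), and the trace produces $-\sum_j Z_\ell\omega_{jk}^j$. This vanishes because the $X_i$ are basic vector fields for the local submersion, so the horizontal structure functions $\omega_{ij}^k$ descend to the base and are annihilated by any vertical field.

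For the remaining tensorial identities, each is a direct expansion exploiting the vanishing at $x$. For $\J(\eta)$, the Maurer-Cartan type relation $d\nu_m(X_i,Z_\ell) = -\beta_{i\ell}^m = 0$ at $x$ shows that the horizontal part of $\iota_{Z_\ell}d\eta_\V$ reduces to $-\sum_i(X_ig_\ell)\theta_i$; applying $J_{Z_\ell}$ (which annihilates vertical forms) and summing gives the claimed formula. For $\delta_\ch T$ compute $(\nabla_{X_j}T)(X_j,X_k)$ at $x$: the vanishing of all Christoffels leaves only the derivative of the coefficient in $T(X_j,X_k) = -\sum_\ell\gamma_{jk}^\ell Z_\ell$, and passage to forms via $\langle A\eta,\xi\rangle = \xi(A\eta^\sharp)$ produces the stated formula. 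The identity for $\delta^*_\ch T$ follows by transposition, exchanging horizontal and vertical components. The formula for $\mathfrak{T}^\varepsilon_{X_i}\eta$ is an immediate substitution of the torsion and $J$-formulas into the two defining cases of $\mathfrak{T}^\varepsilon_V$. Finally for $\langle\mathbf{J}^2(\eta),\eta\rangle_\ch$, iterate $J_{Z_\ell}(\theta_i) = -\sum_j\gamma_{ij}^\ell\theta_j$ twice and repackage using $\gamma_{ij}^\ell = -\gamma_{ji}^\ell$ to recognize the resulting double sum as $-\sum_\ell\sum_i\left(\sum_j\gamma_{ij}^\ell f_j\right)^2$.

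The only real difficulty is careful bookkeeping: tracking sign conventions consistently between the $(1,1)$-tensor picture on $T\bM$ and its induced action on $T^*\bM$, exploiting the skew-symmetries $\gamma_{ij}^\ell = -\gamma_{ji}^\ell$ and $\beta_{ik}^j = -\beta_{ij}^k$, and distinguishing evaluations at $x$ (where all connection Christoffels vanish) from global expressions (where derivatives of structure functions survive). No conceptual hurdle arises beyond the Koszul/Bott-connection computations already implicit in the proof of Lemma \ref{frame}.
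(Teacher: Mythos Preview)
Your approach is correct and is essentially the same as the paper's: both carry out the local computations in the adapted frame of Lemma \ref{frame}, using the vanishing of $\omega_{ij}^k$ and $\beta_{ij}^k$ at $x$. The paper's proof simply declares these computations routine and only comments on the vanishing of $\mathbf{Ricci}(Z_\ell,X_k)$; you have filled in the details the paper omits. For that one non-routine point, the paper argues directly that $\nabla_{Z_\ell}\nabla_{X_i}X_k=0$ because $\nabla_{X_i}X_k$ is basic (and $\nabla_{[X_i,Z_\ell]}X_k=0$ because $[X_i,Z_\ell]$ is vertical and $X_k$ is basic), whereas you first extract the intermediate expression $-\sum_j Z_\ell\omega_{jk}^j$ and then observe that basicness of the $X_i$ forces the $\omega_{ij}^k$ to be constant along the fibers; these are two phrasings of the same fact.
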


\begin{proof}
The computations are routine. We just point out that the vanishing of $\mathbf{Ricci} (Z_\ee,X_k)$ comes from the fact that since $X_k$ and  $\nabla_{X_i}X_k$ are basic and $[X_i,Z_\ee]$ is tangent to the leaves, we have at $x$, $\nabla_{Z_\ee}X_k=\nabla_{Z_\ee}\nabla_{X_i}X_k=\nabla_{[X_i,Z_\ee]}X_k=0$.

\end{proof}

\begin{lemma}\label{lemma-sq-infty}
Let  $\square_\infty$ be the operator defined  on one-forms by
\[
\square_\infty=L+2\J-\Ric_\ch+\delta^*_\mathcal{H} T ,
\]
then for any $f\in C^\infty(\M)$,
\[
 dLf=\square_\infty df.
\]
\end{lemma}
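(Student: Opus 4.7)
The identity is local, so I would fix an arbitrary point $x \in \M$ and evaluate both sides of $dLf = \square_\infty df$ on the basis vectors of the adapted frame of Lemma \ref{frame} (where $\omega_{ij}^k(x) = \beta_{ij}^k(x) = 0$). Since the three correction terms $2\J df$, $\Ric_\ch df$ and $\delta^*_\ch T df$ are all purely horizontal one-forms by Lemma \ref{local-compute}, testing against $Z_\ell$ reduces to verifying $(dLf)(Z_\ell) = (Ldf)(Z_\ell)$ at $x$, while testing against $X_k$ amounts to identifying the three correction terms inside the discrepancy $(dLf - Ldf)(X_k)|_x$.

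For the horizontal evaluation I would write $(dLf)(X_k) = X_k(Lf)$ and commute $X_k$ past $L = \sum_i(X_i^2 - \nabla_{X_i}X_i)$ using the identity $X_k X_i^2 f - X_i^2 X_k f = X_i([X_k, X_i]f) + [X_k, X_i](X_i f)$. Expanding $[X_k, X_i]$ via the structure relations and using $\omega_{ki}^m(x) = 0$ together with $[X_i, Z_\ell]|_x = 0$, the commutator yields three kinds of contributions beyond $\sum_i X_i^2 X_k f$: a vertical-derivative term $2\sum_{i,\ell}\gamma_{ki}^\ell X_i Z_\ell f$ (the factor $2$ coming from $X_i Z_\ell f + Z_\ell X_i f = 2 X_i Z_\ell f$ at $x$), which by Lemma \ref{local-compute} coincides with $2(\J df)(X_k)$; a term $\sum_{i,\ell}(X_i \gamma_{ki}^\ell)(Z_\ell f)$ matching $(\delta^*_\ch T df)(X_k)$; and horizontal $X_i$-derivatives of $\omega$-symbols. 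Combined with the piece $-\sum_{i,j}(X_k \omega_{ji}^i)(X_j f)$ coming from $X_k((\nabla_{X_i}X_i)f)$ and with the Christoffel piece of $(Ldf)(X_k)$ (using $\Gamma_{ik}^j = \tfrac{1}{2}(\omega_{ik}^j + \omega_{ji}^k + \omega_{jk}^i)$ from the proof of Lemma \ref{frame}), these should assemble into $-(\Ric_\ch df)(X_k) = -\sum_j (X_j f)\,\mathbf{Ricci}(X_j, X_k)$.

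For the vertical evaluation, the analogous commutator expansion with $[Z_\ell, X_i] = -\sum_j \beta_{i\ell}^j Z_j$ (vanishing at $x$) gives $Z_\ell X_i^2 f|_x = X_i^2 Z_\ell f|_x - \sum_j(X_i \beta_{i\ell}^j)(Z_j f)|_x$, which exactly matches $(Ldf)(Z_\ell)|_x$. The only residue is $\sum_i Z_\ell((\nabla_{X_i}X_i)f)|_x = \sum_k (X_k f)\sum_i Z_\ell \omega_{ki}^i|_x$; this vanishes because $\sum_i \omega_{ki}^i = -\sum_j \omega_{jk}^j$, whose $Z_\ell$-derivative equals $-\mathbf{Ricci}(Z_\ell, X_k) = 0$ by Lemma \ref{local-compute}.

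The main obstacle is matching the residual horizontal $\omega$-derivative coefficient with $-\mathbf{Ricci}(X_j, X_k)$. After applying $\omega_{ki}^j = -\omega_{ik}^j$ to cancel cross terms, the discrepancy in front of each $(X_j f)$ reduces to the identity
\[
\sum_i X_i \omega_{jk}^i + X_k\,\mathrm{tr}_j - X_j\,\mathrm{tr}_k = 0, \qquad \mathrm{tr}_k := \sum_j \omega_{jk}^j,
\]
at $x$. This is precisely the Jacobi identity $[X_i,[X_j,X_k]] + [X_j,[X_k,X_i]] + [X_k,[X_i,X_j]] = 0$ read in the $X_p$-direction at $x$ (where $\omega(x)=0$ kills all lower-order pieces), setting $p = i$ and summing over $i$. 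This single Jacobi step is the algebraic core of the identity; the rest is routine bookkeeping in the adapted frame.
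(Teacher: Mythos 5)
Your proposal is correct and follows essentially the same route as the paper's proof: a direct computation in the adapted frame of Lemma \ref{frame} at its center point, commuting $X_k$ (resp.\ $Z_\ell$) past $L$ and identifying the resulting first-order terms with the local expressions of Lemma \ref{local-compute}. The only differences are presentational (you evaluate on frame vectors rather than expanding in the dual coframe), and your explicit Jacobi-identity step correctly isolates the one residual piece of algebra, equivalent to the symmetry of the frame formula for $\mathbf{Ricci}$ in its horizontal arguments, which the paper absorbs silently into the phrase ``we deduce from Lemma \ref{local-compute}''.
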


\begin{proof}
We compute,
\begin{align}
\begin{aligned}  \label{eq-commutator}
 dLf - Ldf & = \sum_{i=1}^n ([X_i,L]f) \theta_i -(X_i f) L\theta_i - \sum_{j=1}^n 2(X_j X_i f) \nabla_{X_j} \theta_i   \\
   & +\sum_{\ee=1}^m  ([Z_\ee,L]f)\nu_\ee -(Z_\ee f)L\nu_\ee - \sum_{j=1}^n 2(X_j Z_\ee f) \nabla_{X_j} \nu_\ee .
\end{aligned}
\end{align}

Now, at the center $x$ of the frame, we have $ \nabla_{X_j} \theta_i =  \nabla_{X_j} \nu_\ee   =0$, and
\begin{align}
\begin{aligned} \label{eq-commutator2}
L \theta_i = \sum_{j,k=1}^n (-X_j\Gamma_{jk}^i ) \theta_k , \quad L \nu_\ee =\sum_{j=1}^n \sum_{k=1}^m (-X_j \beta_{jk}^\ee) \nu_k .
\end{aligned}
\end{align}
where the $\Gamma_{ij}^k$'s are  the Cristofell symbols of the Bott connection.
We also easily compute
\begin{align*}
 [Z_\ee ,L] f = \sum_{j=1}^m ( -\sum_{i=1}^n X_i \beta_{i\ee}^j  ) Z_j f
\end{align*}
and
\begin{align}
\begin{aligned} \label{eq-commutator3}
 [X_i,L] f =   \sum_{j=1}^n \sum_{\ee=1}^m  2 \gamma_{ij}^\ee X_j Z_\ee f + \sum_{\ee=1}^m (  \sum_{j=1}^n X_j\gamma_{ij}^\ee ) Z_\ee f  + \sum_{j,k=1}^n (X_k \omega_{ik}^j -X_i \omega_{jk}^k ) X_j f .
\end{aligned}
\end{align}

If we plug this  in (\ref{eq-commutator}), then the second line of (\ref{eq-commutator}) turns out to be $0$ and we deduce from Lemma \ref{local-compute} that at $x$, we have
\begin{align*}
 dLf - Ldf =  2\J(df) -\Ric_\ch(df) +\delta^*_\mathcal{H} T (df).
\end{align*}
This completes the proof.
\end{proof}

Let us consider the map $\mathcal{T} \colon \Gamma^\infty(\wedge^2 T^*\M)\to \Gamma^\infty( T^*\M)$ which is given in the local frame by,
\[
\mathcal{T}(\theta_i\wedge\theta_j)=-\gamma_{ij}^\ee\nu_\ee,\quad \mathcal{T}(\theta_i\wedge\nu_k)=\mathcal{T}(\nu_k\wedge\nu_\ee)=0.
\]

\begin{lemma}\label{lemma-sq-ep}
For $\varepsilon >0$
\[
\square_\varepsilon=\square_\infty-\frac{2}{\ep}\mathcal{T}\circ d.
\]
\end{lemma}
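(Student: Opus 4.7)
The identity $\square_\varepsilon = \square_\infty - \frac{2}{\varepsilon}\mathcal{T}\circ d$ is an equality of local differential operators, so it is enough to verify it pointwise at the center $x$ of the adapted frame $\{X_1,\dots,X_n,Z_1,\dots,Z_m\}$ of Lemma \ref{frame}, where $\omega_{ij}^k(x) = \beta_{ij}^k(x) = 0$, and consequently $\nabla_{X_i}X_i(x) = 0$ and $\nabla_{X_i}\theta_j(x) = \nabla_{X_i}\nu_\ell(x) = 0$. Using (\ref{eq-L-form}) at $x$, the operator $\square_\varepsilon$ splits as
\[
\square_\varepsilon\eta\big|_x = L\eta - \sum_{i=1}^n\bigl[\nabla_{X_i}(\mathfrak{T}^\varepsilon_{X_i}\eta) + \mathfrak{T}^\varepsilon_{X_i}(\nabla_{X_i}\eta)\bigr] + \sum_{i=1}^n \mathfrak{T}^\varepsilon_{X_i}(\mathfrak{T}^\varepsilon_{X_i}\eta) - \frac{1}{\varepsilon}\mathbf{J}^2\eta + \frac{1}{\varepsilon}\delta_\mathcal{H}T(\eta) - \mathfrak{Ric}_\mathcal{H}(\eta).
\]
By Lemma \ref{lemma-sq-infty}, the task is thus reduced to checking that the sum of the cross terms and the quadratic $\mathfrak{T}^\varepsilon$ term, minus $\frac{1}{\varepsilon}\mathbf{J}^2\eta + \frac{1}{\varepsilon}\delta_\mathcal{H}T(\eta)$, equals $2\mathfrak{J}(\eta) + \delta^*_\mathcal{H}T(\eta) - \frac{2}{\varepsilon}\mathcal{T}(d\eta)$.

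The computation is then carried out index by index using the formulas of Lemma \ref{local-compute}. For the cross term I substitute $\mathfrak{T}^\varepsilon_{X_i}\eta = \sum_{j,\ell}(\gamma_{ij}^\ell g_\ell\,\theta_j - \frac{1}{\varepsilon}\gamma_{ij}^\ell f_j\,\nu_\ell)$ and, at $x$, $\nabla_{X_i}\eta = \sum_j (X_if_j)\theta_j + \sum_\ell (X_ig_\ell)\nu_\ell$. Applying $\nabla_{X_i}$ and $\mathfrak{T}^\varepsilon_{X_i}$, then summing over $i$ and using the skew-symmetry $\gamma_{ij}^\ell = -\gamma_{ji}^\ell$, the horizontal ($\theta_j$) part of the cross term reproduces $2\mathfrak{J}(\eta) + \delta^*_\mathcal{H}T(\eta)$, while the vertical ($\nu_\ell$) part yields $-\frac{1}{\varepsilon}\delta_\mathcal{H}T(\eta) + \frac{2}{\varepsilon}\sum_{i,j,\ell}\gamma_{ij}^\ell(X_if_j)\nu_\ell$. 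For the quadratic term a direct substitution of $\mathfrak{T}^\varepsilon_{X_i}$ into itself gives, on the $\theta_k$ side, exactly $\frac{1}{\varepsilon}\mathbf{J}^2\eta$ (matching the explicit form of $\mathbf{J}^2\eta$ deduced from the last bullet of Lemma \ref{local-compute}), and on the $\nu_\ell$ side, $-\frac{1}{\varepsilon}\sum_{\ell,k}\nu_\ell g_k\sum_{i,j}\gamma_{ij}^\ell\gamma_{ij}^k$.

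It remains to compute $\mathcal{T}(d\eta)$ at $x$. Cartan's formula together with $[X_i,X_j](x) = \sum_\ell \gamma_{ij}^\ell Z_\ell$ (since $\omega_{ij}^k$ vanishes at $x$) gives $d\eta(X_i,X_j) = X_if_j - X_jf_i - \sum_\ell \gamma_{ij}^\ell g_\ell$, and since $\mathcal{T}$ vanishes on $\theta_i\wedge\nu_\ell$ and $\nu_k\wedge\nu_\ell$ and sends $\theta_i\wedge\theta_j$ to $-\sum_\ell\gamma_{ij}^\ell\nu_\ell$, one finds
\[
-\frac{2}{\varepsilon}\mathcal{T}(d\eta) = \frac{2}{\varepsilon}\sum_{\ell}\nu_\ell\sum_{i,j}\gamma_{ij}^\ell(X_if_j) - \frac{1}{\varepsilon}\sum_{\ell,k}\nu_\ell g_k\sum_{i,j}\gamma_{ij}^\ell\gamma_{ij}^k,
\]
which is precisely the sum of the two vertical leftovers from the cross and quadratic steps. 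Assembling the pieces, the $\pm\frac{1}{\varepsilon}\mathbf{J}^2\eta$ and $\pm\frac{1}{\varepsilon}\delta_\mathcal{H}T(\eta)$ contributions cancel, and what remains is $L\eta + 2\mathfrak{J}(\eta) - \mathfrak{Ric}_\mathcal{H}(\eta) + \delta^*_\mathcal{H}T(\eta) - \frac{2}{\varepsilon}\mathcal{T}(d\eta) = \square_\infty\eta - \frac{2}{\varepsilon}\mathcal{T}(d\eta)$, establishing the lemma at $x$, hence everywhere. The only real difficulty is bookkeeping signs and summation indices; there is no conceptual obstacle, as every ingredient is supplied by Lemma \ref{local-compute} and the adapted-frame vanishings of Lemma \ref{frame}.
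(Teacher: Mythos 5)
Your proposal is correct and follows essentially the same route as the paper: both verify the identity at the center of the adapted frame of Lemma \ref{frame}, expand $-(\nabla_\mathcal{H}-\mathfrak{T}^\varepsilon_\mathcal{H})^*(\nabla_\mathcal{H}-\mathfrak{T}^\varepsilon_\mathcal{H})$ there, and match the resulting terms against $L$, $2\mathfrak{J}$, $\delta^*_\mathcal{H}T$, $\frac{1}{\varepsilon}\delta_\mathcal{H}T$, $\frac{1}{\varepsilon}\mathbf{J}^2$ and $\frac{2}{\varepsilon}\mathcal{T}\circ d$; I checked your intermediate identities (the cross-term split, the quadratic term giving $\frac{1}{\varepsilon}\mathbf{J}^2$ horizontally, and the formula for $-\frac{2}{\varepsilon}\mathcal{T}(d\eta)$) and they agree with the paper's displayed computation.
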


\begin{proof}
A direct computation shows that, at the center $x$,  for any $\eta=\sum_{j=1}^nf_j\theta_j+\sum_{k=1}^mg_k\nu_k$, we have
\begin{align*}
 & -(\nabla_\mathcal{H} -\mathfrak{T}_\mathcal{H}^\varepsilon)^* (\nabla_\mathcal{H} -\mathfrak{T}_\mathcal{H}^\varepsilon)\eta\\
=&
\sum_{i,j=1}^n\left(X_i^2f_j-\sum_{\ee=1}^mX_i(\gamma_{ij}^\ee g_\ee)
-  \sum_{k=1}^n(X_i \Gamma_{ij}^k)f_k 
\right)\theta_j \\
 &+
\sum_{\ee=1}^m\left(\sum_{i=1}^n X_i^2 g_\ee+\frac{1}{\ep}\sum_{i,j=1}^n X_i(\gamma_{ij}^\ee f_j)+\sum_{i=1}^n\sum_{k=1}^mX_i(\beta_{ik}^\ee g_k)\right)\nu_\ee
\\
& + \frac{1}{\ep}\sum_{i,j=1}^n\sum_{k=1}^m\left(X_i f_j-\sum_{\ee=1}^m \gamma_{ij}^\ee g_\ee\right)\gamma_{ij}^k\nu_k
 -
\sum_{i,h=1}^n\sum_{\ee=1}^m \left(X_i g_\ee+\frac{1}{\ep}\sum_{j=1}^n\gamma_{ij}^\ee f_j \right)\gamma_{ih}^\ee\theta_h
\end{align*}
Therefore, we have 
\[
\left( -(\nabla_\mathcal{H} -\mathfrak{T}_\mathcal{H}^\varepsilon)^* (\nabla_\mathcal{H} -\mathfrak{T}_\mathcal{H}^\varepsilon)-\frac{1}{\ep}\mathbf{J}^2\right)\eta
=
\left(L+2\J-\frac{2}{\ep}\mathcal{T}\circ d+\delta^*_\ch T-\frac{1}{\ep}\delta_\ch T\right)\eta.
\]
By using the definition of $\square_\infty$ we immediately obtain the conclusion.
\end{proof}

\begin{lemma}\label{thm-cd-sq}
For any   $\eta \in \Gamma^\infty(T^* \M)$,
\[
\frac{1}{2} L \| \eta \|_{\varepsilon}^2 -\langle \square_\varepsilon \eta , \eta \rangle_{\varepsilon} =  \| \nabla_{\mathcal{H}} \eta  -\mathfrak{T}^\varepsilon_{\mathcal{H}} \eta \|_{\varepsilon}^2 + \left\langle \mathfrak{Ric}_{\mathcal{H}} (\eta), \eta \right\rangle_\mathcal{H} -\left \langle \delta_\mathcal{H} T (\eta) , \eta \right\rangle_\mathcal{V} +\frac{1}{\varepsilon} \langle \mathbf{J}^2 (\eta) , \eta \rangle_\mathcal{H}.
\]
\end{lemma}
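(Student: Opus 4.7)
The plan is to reduce the identity to a classical Bochner identity for the $g_\varepsilon$-metric connection $\nabla - \mathfrak{T}^\varepsilon$. Since the statement is pointwise and tensorial, I would work in any local adapted frame $\{X_1,\dots,X_n,Z_1,\dots,Z_m\}$ as in Lemma~\ref{frame}; there is no need to specialize to the center $x$, because formula \eqref{eq-L-form} already carries the correction $\nabla_{X_i}X_i$.

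First I would exploit the fact, recorded in Section~2, that $\mathfrak{T}^\varepsilon_V$ is skew-symmetric for $g_\varepsilon$, so that for any vector field $X$ the connection $D_X := \nabla_X - \mathfrak{T}^\varepsilon_X$ on $T^*\M$ satisfies the Leibniz rule $X\langle\eta,\xi\rangle_\varepsilon = \langle D_X\eta,\xi\rangle_\varepsilon + \langle\eta,D_X\xi\rangle_\varepsilon$. Taking $\xi=\eta$ and iterating yields
\[
X_iX_i\|\eta\|_\varepsilon^2 = 2\langle D_{X_i}^2\eta,\eta\rangle_\varepsilon + 2\|D_{X_i}\eta\|_\varepsilon^2, \qquad (\nabla_{X_i}X_i)\|\eta\|_\varepsilon^2 = 2\langle D_{\nabla_{X_i}X_i}\eta,\eta\rangle_\varepsilon.
\]
Summing over $i$ and using both $L = \sum_i (X_iX_i - \nabla_{X_i}X_i)$ and \eqref{eq-L-form}, I obtain the basic Bochner identity
\[
\tfrac{1}{2}L\|\eta\|_\varepsilon^2 = \langle -(\nabla_\ch - \mathfrak{T}^\varepsilon_\ch)^*(\nabla_\ch-\mathfrak{T}^\varepsilon_\ch)\eta,\eta\rangle_\varepsilon + \|\nabla_\ch\eta - \mathfrak{T}^\varepsilon_\ch\eta\|_\varepsilon^2.
\]

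Next I would insert the definition \eqref{def} of $\square_\varepsilon$, which rewrites the first term on the right as $\langle \square_\varepsilon\eta,\eta\rangle_\varepsilon + \tfrac{1}{\varepsilon}\langle\mathbf{J}^2\eta,\eta\rangle_\varepsilon - \tfrac{1}{\varepsilon}\langle\delta_\ch T\eta,\eta\rangle_\varepsilon + \langle\Ric_\ch\eta,\eta\rangle_\varepsilon$, and move $\langle\square_\varepsilon\eta,\eta\rangle_\varepsilon$ to the left-hand side. It then remains only to identify the remaining pairings with the ones in the target formula.

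This last bookkeeping is what I expect to require the most care. By their definitions, $\mathbf{J}^2\eta$ and $\Ric_\ch\eta$ are horizontal one-forms, whereas Lemma~\ref{local-compute} shows that $\delta_\ch T(\eta)$ is vertical. Combined with the elementary observations that $\langle\alpha,\beta\rangle_\varepsilon = \langle\alpha,\beta\rangle_\ch$ when $\alpha$ is horizontal, and $\langle\alpha,\beta\rangle_\varepsilon = \varepsilon\langle\alpha,\beta\rangle_\V$ when $\alpha$ is vertical (both consequences of $\|\eta\|_\varepsilon^2 = \|\eta\|_\ch^2 + \varepsilon\|\eta\|_\V^2$), the three pairings collapse respectively to $\tfrac{1}{\varepsilon}\langle\mathbf{J}^2\eta,\eta\rangle_\ch$, $-\langle\delta_\ch T\eta,\eta\rangle_\V$ (with the factor $\varepsilon$ cancelling the $\tfrac{1}{\varepsilon}$), and $\langle\Ric_\ch\eta,\eta\rangle_\ch$. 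Assembling these terms yields the stated identity; no further curvature or structural computation is required.
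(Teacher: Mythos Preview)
Your argument is correct, and it is genuinely different from the paper's proof. The paper first invokes Lemma~\ref{lemma-sq-ep} to rewrite $\square_\varepsilon$ as $L+2\J-\Ric_\ch+\delta^*_\ch T-\tfrac{2}{\varepsilon}\mathcal{T}\circ d$, reduces the statement to the auxiliary identity \eqref{eq-cd-eta}, and then verifies \eqref{eq-cd-eta} by a brute-force coordinate computation at the center of the adapted frame, expanding both sides explicitly. You instead exploit directly that $D=\nabla-\mathfrak{T}^\varepsilon$ is a $g_\varepsilon$-metric connection: the Leibniz rule for $D$ plus the local expression \eqref{eq-L-form} yield the Bochner identity $\tfrac12 L\|\eta\|_\varepsilon^2=\langle -(\nabla_\ch-\mathfrak{T}^\varepsilon_\ch)^*(\nabla_\ch-\mathfrak{T}^\varepsilon_\ch)\eta,\eta\rangle_\varepsilon+\|\nabla_\ch\eta-\mathfrak{T}^\varepsilon_\ch\eta\|_\varepsilon^2$ in one line, and the rest is bookkeeping with the definition \eqref{def} and the horizontality/verticality of $\mathbf{J}^2\eta$, $\Ric_\ch\eta$, $\delta_\ch T(\eta)$.

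Your route is shorter and more transparent: it makes clear that the lemma is nothing but the standard Bochner identity for the rough Laplacian of a metric connection, and it needs neither Lemma~\ref{lemma-sq-ep} nor any work at the center of the frame. The paper's computational approach, on the other hand, has the side benefit of being a consistency check on the coordinate formulas of Lemma~\ref{local-compute} and Lemma~\ref{lemma-sq-ep}, and it establishes the intermediate identity \eqref{eq-cd-eta} which displays explicitly how the terms $2\J$ and $\tfrac{2}{\varepsilon}\mathcal{T}\circ d$ enter. Both the reduction step (that $\langle\Ric_\ch\eta,\eta\rangle_\varepsilon=\langle\Ric_\ch\eta,\eta\rangle_\ch$ and $\langle\delta^*_\ch T\eta,\eta\rangle_\varepsilon=\langle\delta_\ch T\eta,\eta\rangle_\V$) are shared between the two proofs.
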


\begin{proof}
First note that
\[
\square_\varepsilon=\square_\infty-\frac{2}{\ep}\mathcal{T}\circ d=L+2\J-\Ric_\ch+\delta^*_\ch T-\frac{2}{\ep}\mathcal{T}\circ d,
\]
and since $\delta^*_\ch T(\eta)$ and $\Ric(\eta)$ are horizontal while $\delta_\ch T(\eta)$ is vertical, we have 
\[
\langle \mathfrak{Ric}_{\mathcal{H}} (\eta), \eta \rangle_\ep=\langle \mathfrak{Ric}_{\mathcal{H}} (\eta), \eta \rangle_\mathcal{H},
\quad
\langle \delta_\mathcal{H}^* T (\eta) , \eta \rangle_{\varepsilon}= \langle \delta_\mathcal{H} T (\eta) , \eta \rangle_\mathcal{V}.
\]
Therefore it is equivalent to show that
\begin{align}\label{eq-cd-eta}
\frac{1}{2} L \| \eta \|_{\varepsilon}^2 -\left\langle \left(L+2\J-\frac{2}{\ep}\mathcal{T}\circ d\right)\eta,\eta\right\rangle_\varepsilon
= \| \nabla_{\mathcal{H}} \eta  -\mathfrak{T}^\varepsilon_{\mathcal{H}} \eta \|_{\varepsilon}^2 +\frac{1}{ \varepsilon}\left\langle  \mathbf{J}^2 (\eta), \eta \right\rangle_{\mathcal{H}}.
\end{align}

We now compute that, at the center $x$,
\begin{align}\label{eq-cd-eta-1}
& \frac{1}{2} L \| \eta \|_{\varepsilon}^2 -\left\langle \left(L+2\J-\frac{2}{\ep} \mathcal{T}\circ d\right)\eta,\eta\right\rangle_\varepsilon \\ \notag
 =&
\|\nabla_\ch \eta\|^2_\ch+\ep\|\nabla_\ch \eta\|_\V^2+2\sum_{i,j=1}^n\sum_{\ee=1}^m\gamma_{ij}^\ee (X_ig_\ee)f_j
-2\sum_{i,j=1}^n\sum_{k,\ee=1}^m\gamma_{ij}^\ee(X_if_j-\frac{1}{2}\gamma_{ij}^kg_k)g_\ee.
\end{align}
and
\begin{align*}
  \| \nabla_{\mathcal{H}} \eta  -\mathfrak{T}^\varepsilon_{\mathcal{H}} \eta \|_{\varepsilon}^2
 & =\sum_{i,j=1}^n\left(X_if_j-\sum_{\ee=1}^m\gamma_{ij}^\ee g_\ee \right)^2
+\ep\sum_{\ee=1}^m \sum_{i=1}^n\left(X_ig_\ee+\frac{1}{\varepsilon}\sum_{j=1}^n\gamma_{ij}^\ee f_j\right)^2
\\
& = \|\nabla_\ch \eta\|^2_\ch+\ep\|\nabla_\ch \eta\|_\V^2-2\sum_{i,j=1}^n\sum_{\ee=1}^m(X_if_j)\gamma_{ij}^\ee g_\ee+\sum_{i,j=1}^n\sum_{k,\ee=1}^m\gamma_{ij}^\ee\gamma_{ij}^kg_\ee g_k
\\
& +2\ep\sum_{i=1}^n\sum_{\ee=1}^m(X_ig_\ee)\left(\frac{1}{\ep}\sum_{j=1}^n\gamma_{ij}^\ee f_j \right)
+\ep\sum_{\ee=1}^m \sum_{i=1}^n\left(\frac{1}{\varepsilon}\sum_{j=1}^n\gamma_{ij}^\ee f_j\right)^2
\end{align*}
The claim easily follows.
\end{proof}

\begin{proposition}\label{prop-ineq}
Let $f \in C^\infty(\M)$. With $\eta=df$, we have
\begin{align*}
 & \frac{1}{2} L \| \eta \|_{\varepsilon}^2 -\langle \square_\varepsilon \eta , \eta \rangle_{\varepsilon}  \\
   \ge & \frac{1}{n}\left( \mathbf{Tr}_\mathcal{H}  \nabla_\ch^\# \eta \right)^2 -\frac{1}{4} \mathbf{Tr}_\mathcal{H} (J^2_{\eta})+ \left\langle \mathfrak{Ric}_{\mathcal{H}} (\eta), \eta \right\rangle_\mathcal{H} -\left \langle \delta_\mathcal{H} T (\eta) , \eta \right\rangle_\mathcal{V} +\frac{1}{\varepsilon} \langle \mathbf{J}^2 (\eta) , \eta \rangle_\mathcal{H}
\end{align*}\end{proposition}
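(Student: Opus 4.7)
The plan is to combine the already established Lemma \ref{thm-cd-sq} with a careful analysis of the term $\| \nabla_{\mathcal{H}} \eta - \mathfrak{T}^\varepsilon_{\mathcal{H}} \eta \|_{\varepsilon}^2$, exploiting the fact that $\eta=df$ is closed. Concretely, by Lemma \ref{thm-cd-sq} it suffices to prove
\[
\| \nabla_{\mathcal{H}} \eta - \mathfrak{T}^\varepsilon_{\mathcal{H}} \eta \|_{\varepsilon}^2 \ \ge \ \frac{1}{n}\bigl( \mathbf{Tr}_\mathcal{H} \nabla_\ch^\# \eta \bigr)^2 -\frac{1}{4} \mathbf{Tr}_\mathcal{H} (J^2_{\eta}),
\]
so all the Ricci/torsion/$\mathbf J^2$ terms carry over unchanged. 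I will work at the center $x$ of an adapted frame as in Lemma \ref{frame}, and I will drop the vertical-cotangent contribution $\varepsilon \sum_{i,\ell}(X_i g_\ell+\tfrac{1}{\varepsilon}\sum_j \gamma_{ij}^\ell f_j)^2$ from $\| \nabla_{\mathcal{H}} \eta - \mathfrak{T}^\varepsilon_{\mathcal{H}} \eta \|_{\varepsilon}^2$ since it is non-negative; this eliminates the $\varepsilon$-dependence and reduces the problem to the horizontal-horizontal block.

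That block is $\sum_{i,j}A_{ij}^{2}$ with $A_{ij}=X_if_j-\sum_\ell \gamma_{ij}^\ell g_\ell$. The key input is that $\eta=df$ forces $d\eta=0$; expanding $d(df)(X_i,X_j)=0$ in the adapted frame and using $\omega_{ij}^k(x)=0$ yields the identity
\[
X_i f_j - X_j f_i \;=\; \sum_\ell \gamma_{ij}^\ell g_\ell \qquad \text{at } x.
\]
Using this together with $\gamma_{ji}^\ell=-\gamma_{ij}^\ell$, I will split $A_{ij}$ into its symmetric and antisymmetric parts,
\[
A_{ij}=\tfrac{1}{2}(X_if_j+X_jf_i)-\tfrac{1}{2}\sum_\ell \gamma_{ij}^\ell g_\ell,
\]
so that the symmetric part is exactly $(\nabla_\ch^\# \eta)_{ij}$ and the antisymmetric part is purely in terms of the $\gamma$'s.

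Squaring and summing, the cross term vanishes because $\nabla_\ch^\#\eta$ is symmetric in $(i,j)$ while $\gamma_{ij}^\ell$ is antisymmetric; this gives the clean decomposition
\[
\sum_{i,j}A_{ij}^{2} \;=\; \sum_{i,j}(\nabla_\ch^\#\eta)_{ij}^{2} \;+\; \tfrac{1}{4}\sum_{i,j}\Bigl(\sum_\ell \gamma_{ij}^\ell g_\ell\Bigr)^{2}.
\]
For the first summand, Cauchy-Schwarz applied to the diagonal gives $\sum_{i,j}(\nabla_\ch^\#\eta)_{ij}^{2}\ge \frac{1}{n}(\mathbf{Tr}_\ch \nabla_\ch^\#\eta)^{2}$. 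For the second summand, a direct computation of $J_\eta=\sum_\ell g_\ell J_{Z_\ell}$ using the defining relation $g_\ch(J_{Z_\ell}X_i,X_j)=g_\V(Z_\ell,T(X_i,X_j))=-\gamma_{ij}^\ell$ yields $\mathbf{Tr}_\ch(J_\eta^{2})=-\sum_{i,j}\bigl(\sum_\ell g_\ell \gamma_{ij}^\ell\bigr)^{2}$, so that the second summand is exactly $-\tfrac{1}{4}\mathbf{Tr}_\ch(J_\eta^{2})$. Combining these two bounds yields the desired inequality.

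The only subtle step, and the one I expect to be the main obstacle to present cleanly, is the algebraic bookkeeping that makes the cross term vanish and identifies the residual $\gamma$-square sum with $-\tfrac{1}{4}\mathbf{Tr}_\ch J_\eta^{2}$; everything else is Cauchy-Schwarz plus discarding a non-negative quantity. All computations are local and justified by Lemma \ref{frame}, so the apparent $\varepsilon$-dependence on the left-hand side drops out at the lower-bound level, which is consistent with the fact that the right-hand side of the proposition does not depend on $\varepsilon$ (apart from the reused term $\frac{1}{\varepsilon}\langle \mathbf J^{2}(\eta),\eta\rangle_\ch$ coming directly from Lemma \ref{thm-cd-sq}).
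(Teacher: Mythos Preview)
Your proposal is correct and follows essentially the same approach as the paper: both reduce via Lemma~\ref{thm-cd-sq} to bounding $\|\nabla_\ch\eta-\mathfrak{T}^\varepsilon_\ch\eta\|_\varepsilon^2$ from below, drop the nonnegative vertical block, use the closedness of $\eta=df$ (i.e., $X_if_j-X_jf_i=\sum_\ell\gamma_{ij}^\ell g_\ell$ at $x$) to obtain the exact decomposition $\sum_{i,j}A_{ij}^2=\|\nabla_\ch^\#\eta\|_\ch^2+\tfrac14\sum_{i,j}(\sum_\ell\gamma_{ij}^\ell g_\ell)^2$, and finish with Cauchy--Schwarz on the trace plus the identification of the $\gamma$-square with $-\tfrac14\mathbf{Tr}_\ch(J_\eta^2)$. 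Your direct symmetric/antisymmetric split of $A_{ij}$ is a slightly cleaner route to that decomposition than the paper's two-step polarization, but the argument is otherwise identical.
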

\begin{proof}
We have
\begin{align*}
\|\nabla_\ch\eta-\mathfrak{T}^\ep_\ch\eta\|_\ch^2=\sum_{i,j=1}^n \left( X_if_j-\sum_{\ee=1}^m\gamma_{ij}^\ee g_\ee\right)^2.
\end{align*}
We now compute
\begin{align*}
\sum_{i,j=1}^n \left( X_if_j-\sum_{\ee=1}^m\gamma_{ij}^\ee g_\ee\right)^2 &= \frac{1}{2} \sum_{i,j=1}^n \left( X_if_j-\sum_{\ee=1}^m\gamma_{ij}^\ee g_\ee\right)^2+\left( X_j f_i+\sum_{\ee=1}^m\gamma_{ij}^\ee g_\ee\right)^2 \\
&= \frac{1}{4} \sum_{i,j=1}^n \left( X_if_j+X_jf_i \right)^2+\left( X_j f_i-X_if_j+2 \sum_{\ee=1}^m\gamma_{ij}^\ee g_\ee\right)^2
\end{align*}
Since $\eta=df$, we have  at the center of the frame $x$,
\[
X_j f_i-X_if_j=-\sum_{\ee=1}^m\gamma_{ij}^\ee g_\ee.
\]
Thus,
 \begin{align*}
\|\nabla_\ch\eta-\mathfrak{T}^\ep_\ch\eta\|_\ch^2
=\|\nabla^\#_\ch\eta\|^2_\ch+\frac{1}{4}\sum_{i,j=1}^n\left( \sum_{\ee=1}^m\gamma_{ij}^\ee g_\ee\right)^2,
 \end{align*}
and from Cauchy-Schwarz inequality, we conclude
 \begin{align*}
   \| \nabla_{\mathcal{H}} \eta  -\mathfrak{T}^\varepsilon_{\mathcal{H}} \eta \|_{\varepsilon}^2
  & = \| \nabla^\#_{\mathcal{H}} \eta  \|_{\varepsilon}^2
  +\frac{1}{4}\sum_{i,j=1}^n\left( \sum_{\ee=1}^m\gamma_{ij}^\ee g_\ee\right)^2
  +\ep \| \nabla_{\mathcal{H}} \eta  -\mathfrak{T}^\varepsilon_{\mathcal{H}} \eta \|_\V^2 \\
 & \ge
  \frac{1}{n}\left( \mathbf{Tr}_\mathcal{H}  \nabla_\ch^\# \eta \right)^2 -\frac{1}{4} \mathbf{Tr}_\mathcal{H} (J^2_{\eta}).
 \end{align*}
\end{proof}

We are now finally in position to conclude the proof of Theorem \ref{Bochner}.

\

\textbf{Proof of Theorem \ref{Bochner}}

From Lemma \ref{lemma-sq-infty} and \ref{lemma-sq-ep} we immediately obtain that
\[
dLf=\square_\varepsilon df,
\]
for all $f \in C^\infty(\M)$. The second part is Lemma \ref{thm-cd-sq} and Proposition \ref{prop-ineq}.

\section{Stochastic completeness }

Throughout the section we consider, as above, a smooth connected manifold $\M$ which is equipped with a Riemannian foliation with bundle like metric $g$ and totally geodesic leaves. We also assume  that for every horizontal one-form $\eta \in \Gamma^\infty(\mathcal{H}^*)$,
\[
 \langle \mathfrak{Ric}_{\mathcal{H}} (\eta) , \eta  \rangle_\mathcal{H} \ge -K \| \eta \|^2_\mathcal{H} , \quad -\langle \mathbf{J}^2 \eta, \eta  \rangle_\mathcal{H} \le \kappa  \| \eta \|^2_\mathcal{H},
\]
with $K,\kappa \ge 0$.

\

We moreover assume that the metric $g$ is complete and that the horizontal distribution $\mathcal{H}$ of the foliation is bracket-generating and Yang-Mills (see Besse \cite{Besse}, Definition 9.35). The hypothesis that $\mathcal{H}$ is bracket generating implies that the horizontal Laplacian $L$ is hypoelliptic and it is easily seen that with our notations the Yang-Mills condition is equivalent to the fact that
\[
\delta_\mathcal{H} T=0.
\]
The operator
\[
\square_\varepsilon=-(\nabla_\mathcal{H} -\mathfrak{T}_\mathcal{H}^\varepsilon)^* (\nabla_\mathcal{H} -\mathfrak{T}_\mathcal{H}^\varepsilon)-\frac{1}{ \varepsilon} \mathbf{ J}^2 - \mathfrak{Ric}_{\mathcal{H}}.
\]
that we introduced in the previous section is then symmetric for the $(L^2,g_\varepsilon)$ product on sections. We observe that without the Yang-Mills condition, the operator $\square_\varepsilon$ is not symmetric and thus we can not use the classical theory of self-adjoint operators to define the semigroup generated by it. Lemma \ref{commu2} below would then need to be interpreted in a  completely different way. Even if we assume a lower bound on $\delta_\mathcal{H} T$, it is not clear how to prove the stochastic completeness (Theorem 4.2 below). Interestingly in \cite{BG}, though the method is completely different, the Yang-Mills is also crucially used to prove the stochastic completeness.

\

The completeness of the metric $g$ implies that the horizontal Laplacian $L$ is essentially self-adjoint on the space of smooth and compactly supported functions and that the operator $\square_\varepsilon$ is essentially self-adjoint on the space of smooth and compactly supported one-forms (see the argument in Lemma 4.3 of \cite{B}).

Since $ \square_\varepsilon$ is essentially self-adjoint, it admits a unique self-adjoint extension which generates thanks to the spectral theorem a semigroup $Q^\varepsilon_t=e^{ t \square_\varepsilon}$.   We will denote by $P_t=e^{ t L}$ the semigroup generated by $ L$. We have the following commutation property:

\begin{lemma}\label{commu2}
If $f \in C_0^\infty(\M)$, then for every $t \ge 0$,
\[
d P_t f=Q^\varepsilon_t df.
\]
\end{lemma}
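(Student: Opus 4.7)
The plan is to use a semigroup interpolation argument. Fix $f \in C_0^\infty(\M)$ and $t > 0$, and consider the curve of one-forms
\[
\phi(s) = Q^\varepsilon_{t-s}\, dP_s f, \quad s \in [0,t].
\]
If I can show that $\phi'(s) = 0$ in $L^2(T^*\M, g_\varepsilon)$, then $\phi(0) = \phi(t)$ gives exactly $Q^\varepsilon_t df = dP_t f$.

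First I would verify that $\phi$ is well-defined as an $L^2$-valued curve. By hypoellipticity of the parabolic operator $\partial_s - L$ (H\"ormander), $P_s f$ is smooth in $(s,x)$, and spectral calculus ensures $P_s f \in \mathrm{Dom}(L^k)$ for every $k$. The horizontal part of $dP_s f$ is in $L^2$ since $\int \| \nabla_\mathcal{H} P_s f\|_\mathcal{H}^2 \, d\mu = -\int (P_s f)(L P_s f)\, d\mu < \infty$. For the vertical part one uses the Bochner inequality of Theorem \ref{Bochner} at horizontal one-forms $d P_s f$, integrated against a cutoff, to obtain an a priori bound on $\int \|\nabla_\mathcal{V} P_s f\|_\mathcal{V}^2 \, d\mu$ in terms of $\|f\|_2$ and $\|Lf\|_2$; the sign of the $\tfrac{1}{\varepsilon}\langle \mathbf{J}^2\eta,\eta\rangle_\mathcal{H}$ term combined with the lower bounds on $\mathfrak{Ric}_\mathcal{H}$ and the Yang-Mills hypothesis $\delta_\mathcal{H} T = 0$ makes this estimate effective.

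Next I would differentiate $\phi$. The function $s \mapsto P_s f$ is smooth into $L^2$ with $\frac{d}{ds} P_s f = L P_s f$, and since $d$ commutes with $\partial_s$ on smooth functions, $\frac{d}{ds} dP_s f = d L P_s f$. By the intertwining $dL = \square_\varepsilon d$ from Theorem \ref{Bochner}, this equals $\square_\varepsilon dP_s f$. On the other hand, by essential self-adjointness of $\square_\varepsilon$ and the spectral theorem, $\frac{d}{ds} Q^\varepsilon_{t-s}\omega = -\square_\varepsilon Q^\varepsilon_{t-s}\omega$ for any $\omega$ in the domain of $\square_\varepsilon$, and $\square_\varepsilon$ commutes with $Q^\varepsilon_{t-s}$ by functional calculus. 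Putting these two differentiations together,
\[
\phi'(s) = -\square_\varepsilon Q^\varepsilon_{t-s} dP_s f + Q^\varepsilon_{t-s}\, d L P_s f = -\square_\varepsilon Q^\varepsilon_{t-s} dP_s f + Q^\varepsilon_{t-s} \square_\varepsilon dP_s f = 0.
\]

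The main obstacle is the functional-analytic justification: ensuring $dP_s f$ actually lies in $\mathrm{Dom}(\square_\varepsilon)$ uniformly in $s$, and that the formal computations above are legitimate at the endpoints $s = 0$ and $s = t$. I would handle this by a truncation procedure, approximating $f$ and the identity inside the integral $\int_0^t \phi'(s)\, ds$ by smooth compactly supported sections and using the essential self-adjointness of $L$ and $\square_\varepsilon$ (both established just before the lemma in the paper) to pass to the limit. The Bochner inequality provides the uniform $L^2$ control on the approximants needed to justify the interchange; continuity of $P_s$ in $L^2$ and strong continuity of $Q^\varepsilon_s$ then close the argument at the endpoints.
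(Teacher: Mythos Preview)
Your interpolation scheme $\phi(s)=Q^\varepsilon_{t-s}\,dP_sf$ is, once the functional analysis is in place, equivalent to the paper's uniqueness argument: both reduce to the statement that $s\mapsto dP_sf$ is an $L^2$ solution of $\partial_s\eta=\square_\varepsilon\eta$, and essential self-adjointness forces it to coincide with $Q^\varepsilon_s\,df$. So the architecture is fine.

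The gap is in your claim that $dP_sf\in L^2$, specifically the vertical part. You write that the Bochner inequality of Theorem~\ref{Bochner}, integrated against a cutoff, yields an a~priori bound on $\int\|\nabla_\mathcal{V}P_sf\|^2_\mathcal{V}\,d\mu$. But under the hypotheses of this section there is no assumption of the form $-\tfrac14\mathbf{Tr}_\mathcal{H}(J^2_{\eta_2})\ge\rho_2\|\eta_2\|^2_\mathcal{V}$ (that is Section~5), so the Bochner inequality gives no coercivity in the vertical direction. Moreover $dP_sf$ is not a horizontal one-form, and the term $\tfrac1\varepsilon\langle\mathbf{J}^2\eta,\eta\rangle_\mathcal{H}$ you invoke involves only the horizontal component and has the wrong sign for the purpose. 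Even the cutoff step is circular: $\int(Lh)\|\eta\|^2_\varepsilon$ already contains the unknown $\|\eta\|^2_\mathcal{V}$, and $\langle\square_\varepsilon\eta,\eta\rangle_\varepsilon=\langle dLP_sf,dP_sf\rangle_\varepsilon$ does too. The approximation and truncation you sketch at the end does not break this circle.

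The paper sidesteps all of this with a device you did not find: since the leaves are totally geodesic, the horizontal Laplacian $L$ commutes with the full Laplace--Beltrami operator $\Delta=L+L^V$ on $C^2$ functions (B\'erard-Bergery--Bourguignon). By essential self-adjointness this upgrades to $\Delta e^{sL}=e^{sL}\Delta$, so $P_sf$ lies in the domain of $\Delta$ and
\[
\int_\M\|dP_sf\|^2\,d\mu=-\int_\M(\Delta P_sf)P_sf\,d\mu=-\int_\M(P_s\Delta f)P_sf\,d\mu\le\tfrac12\bigl(\|\Delta f\|_2^2+\|f\|_2^2\bigr).
\]
This gives $dP_sf\in L^2$ directly, with no appeal to the Bochner inequality and no $\rho_2$ hypothesis. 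Once that is known, uniqueness (or, equivalently, your interpolation) finishes the proof.
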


\begin{proof}
Let $\eta_t =Q^\varepsilon_t df$. By essential self-adjointness, it is the unique solution in $(L^2,g_\varepsilon)$ of the heat equation
\[
\frac{\partial \eta}{\partial t}=  \square_\varepsilon \eta,
\]
with initial condition $\eta_0 =df$. From the fact that
\[
dL=\square_\varepsilon d,
\]
we see that $\alpha_t=dP_t f$ solves the heat equation
\[
\frac{\partial \alpha}{\partial t}=  \square_\varepsilon \alpha
\]
with the same initial condition $\alpha_0=df$. In order to conclude, we thus just need to prove that for every $t \ge 0$, $dP_tf$ is in $(L^2,g)$ (and thus also in $(L^2,g_\varepsilon)$). Let us denote by $L^V$ the vertical (leaf) Laplacian. The Laplace-Beltrami operator of $\M$ is therefore $\Delta=L+L^V$. Since the leaves are totally geodesic, $\Delta$ commutes with $L$ on $C^2$ functions (see \cite{BeBo}). Moreover from the spectral theorem, $L e^{t \Delta}$ maps $C_0^\infty(\M)$ into $L^2(\M)$. We deduce by essential self-adjointness that $ L e^{t \Delta}=e^{t \Delta} L$. Similarly we obtain $ e^{sL} e^{t \Delta}=e^{t \Delta} e^{sL}$ which implies $\Delta e^{sL}=e^{sL} \Delta$. This equality implies that $P_s f=e^{sL}f$ is in the domain of $\Delta$ and  
\[
 \int_\M (\Delta P_s f) P_s f d\mu =  \int_\M(  P_s \Delta f) P_s f d\mu 
\]

As a consequence we have that for every $t \ge 0$, $dP_tf$ is in $L^2$, and moreover 
\[
\int_\M \| dP_tf \|^2 d\mu \le \frac{1}{2} \left( \int_\M  \| \Delta f \|^2 d\mu +  \int_\M f^2 d\mu\right)
\]
\end{proof}

\begin{theorem}\label{complete}
For every $\varepsilon >0$, $ t\ge 0, x\in \M $ and $ f \in C_0^\infty(\M)$,
\[
\| dP_t f (x) \|_\varepsilon \le e^{\left( K +\frac{\kappa}{\varepsilon} \right) t} P_t \| d f \|_\varepsilon (x).
\]
As a consequence, the heat semigroup is conservative that is for every $t \ge 0$, $P_t 1=1$.
\end{theorem}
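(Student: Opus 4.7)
\textit{Plan.} The argument combines the Bochner identity of Theorem \ref{Bochner} with a Bakry-style semigroup interpolation. Under the Yang-Mills hypothesis $\delta_\mathcal{H}T=0$ and the assumed curvature bounds, Theorem \ref{Bochner} applied to $\eta=df$ rearranges as
\[
\langle\square_\varepsilon\eta,\eta\rangle_\varepsilon - \tfrac{1}{2}L\|\eta\|_\varepsilon^2 \;\le\; (K+\kappa/\varepsilon)\|\eta\|_\varepsilon^2 - \|\nabla_\mathcal{H}\eta - \mathfrak{T}^\varepsilon_\mathcal{H}\eta\|_\varepsilon^2.
\]
For $f \in C_0^\infty(\M)$, $\delta > 0$ and $0\le s\le t$, I would set $\eta_s = Q^\varepsilon_s df = dP_s f$ (using Lemma \ref{commu2}), $u_s = \sqrt{\|\eta_s\|_\varepsilon^2 + \delta}$, and $\phi(s) = P_{t-s}(u_s)$. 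Differentiating in $s$ and using $\partial_s \eta_s = \square_\varepsilon \eta_s$ together with the diffusion chain rule for $L$ gives
\[
\phi'(s) = P_{t-s}\!\left( \frac{\langle \square_\varepsilon \eta_s, \eta_s\rangle_\varepsilon - \tfrac{1}{2} L\|\eta_s\|_\varepsilon^2}{u_s} + \frac{\|\nabla_\mathcal{H}\|\eta_s\|_\varepsilon^2\|_\mathcal{H}^2}{4 u_s^3}\right).
\]

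The algebraic heart of the argument is a Kato-type identity: because the Bott connection is $g_\varepsilon$-metric and each $\mathfrak{T}^\varepsilon_{X_i}$ is $g_\varepsilon$-skew-symmetric, for any $X_i$ of a local adapted horizontal frame
\[
X_i \|\eta_s\|_\varepsilon^2 = 2\langle \nabla_{X_i}\eta_s - \mathfrak{T}^\varepsilon_{X_i}\eta_s,\eta_s\rangle_\varepsilon,
\]
so Cauchy-Schwarz and summation over $i$ yield $\|\nabla_\mathcal{H}\|\eta_s\|_\varepsilon^2\|_\mathcal{H}^2 \le 4\|\eta_s\|_\varepsilon^2\|\nabla_\mathcal{H}\eta_s - \mathfrak{T}^\varepsilon_\mathcal{H}\eta_s\|_\varepsilon^2$. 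Inserting this together with the Bochner inequality above, and using $\|\eta_s\|_\varepsilon^2 \le u_s^2$, gives $\partial_s u_s - Lu_s \le (K+\kappa/\varepsilon)u_s$; hence $\phi'(s) \le (K+\kappa/\varepsilon)\phi(s)$. Gronwall applied on $[0,t]$ followed by the limit $\delta\downarrow 0$ delivers the announced pointwise estimate.

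For the stochastic completeness, I would use completeness of $g$ to choose cutoffs $\chi_n \in C_0^\infty(\M)$ with $0 \le \chi_n \uparrow 1$ and $\|d\chi_n\|_\infty \to 0$ (for instance by mollifying $\varphi(d(x_0,\cdot)/n)$ for a fixed bump $\varphi$). Applied to $\chi_n$, the gradient estimate gives $\|\nabla_\mathcal{H} P_t \chi_n\|_\mathcal{H} \to 0$ uniformly on $\M$, while $P_t\chi_n \to P_t 1$ pointwise by monotone convergence. Integrating the horizontal gradient along horizontal curves and passing to the limit, $P_t 1$ is constant along every horizontal path; the bracket-generating hypothesis and Chow's theorem then make $P_t 1$ independent of $x$. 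But a constant function is in the domain of $L$ with $L(\text{const})=0$, so $\partial_t P_t 1 = L P_t 1 = 0$, whence $P_t 1 = P_0 1 \equiv 1$.

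The main obstacle is the precise matching between the Bochner identity and the Kato inequality: the refined Weitzenb\"ock term $\|\nabla_\mathcal{H}\eta - \mathfrak{T}^\varepsilon_\mathcal{H}\eta\|_\varepsilon^2$ of Theorem \ref{Bochner} must quantitatively absorb the gradient-squared term produced by the chain rule on $u_s$. This is precisely why $\square_\varepsilon$ was built with the twisted $g_\varepsilon$-metric connection $\nabla - \mathfrak{T}^\varepsilon$; without this alignment, the $\varepsilon^{-1}$ contributions of $\mathbf{J}^2$ could not be controlled and no exponential gradient estimate uniform in $\varepsilon$ would be available.
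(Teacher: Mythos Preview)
Your argument for the gradient estimate is correct but follows a genuinely different route from the paper's. The paper proceeds probabilistically: it introduces the diffusion $(X_t)_{t\ge 0}$ with generator $\tfrac12 L$ and a damped stochastic parallel transport $\tau^\varepsilon_t$ solving a covariant Stratonovitch equation along $X_t$; factoring $\tau^\varepsilon_t=\mathcal{M}^\varepsilon_t\Theta^\varepsilon_t$, the transport $\Theta^\varepsilon_t$ associated with the $g_\varepsilon$-metric connection $\nabla-\mathfrak{T}^\varepsilon$ is a $g_\varepsilon$-isometry, and Gronwall applied to the multiplicative functional $\mathcal{M}^\varepsilon_t$ (driven by $\tfrac1\varepsilon\mathbf{J}^2+\mathfrak{Ric}_\mathcal{H}$) yields the pathwise bound $\|\tau^\varepsilon_t\alpha\|_\varepsilon\le e^{\frac12(K+\kappa/\varepsilon)t}\|\alpha\|_\varepsilon$, which is then fed into the Feynman--Kac representation $Q^\varepsilon_{t/2}\eta(x)=\mathbb{E}_x[\tau^\varepsilon_t\eta(X_t)\mathbf{1}_{t<\mathbf{e}}]$. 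Your Bakry--\'Emery interpolation $\phi(s)=P_{t-s}\sqrt{\|dP_sf\|_\varepsilon^2+\delta}$ is the purely analytic counterpart: the Kato step $\|\nabla_\mathcal{H}\|\eta\|_\varepsilon^2\|_\mathcal{H}^2\le 4\|\eta\|_\varepsilon^2\,\|\nabla_\mathcal{H}\eta-\mathfrak{T}^\varepsilon_\mathcal{H}\eta\|_\varepsilon^2$ plays exactly the role that the isometry of $\Theta^\varepsilon_t$ plays in the stochastic proof, and both rest on the same geometric fact that $\nabla-\mathfrak{T}^\varepsilon$ is $g_\varepsilon$-metric (so $\langle\mathfrak{T}^\varepsilon_{X_i}\eta,\eta\rangle_\varepsilon=0$). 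Your approach is more elementary in that it avoids stochastic calculus entirely and stays within the Bochner--Weitzenb\"ock framework of Section~3; the paper's approach has the advantage of producing a pathwise bound, which is useful for further probabilistic developments. Two small remarks: you actually need the equality in Lemma~\ref{thm-cd-sq} (valid for \emph{all} smooth one-forms, not just $\eta=df$) since you apply it to $\eta_s=dP_sf$; and for conservativeness the paper simply cites Bakry~\cite{Bak}, so your cutoff sketch is at least as detailed, though the final line ``$\partial_tP_t1=LP_t1=0$'' is formal when $\mu(\M)=\infty$ --- one usually closes instead via the multiplicativity $c(t+s)=c(t)c(s)$ of the constant $c(t)=P_t1$ combined with essential self-adjointness.
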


\begin{proof}
The idea is to use the Feynman-Kac  stochastic representation of $Q_t^{\varepsilon}$.

We denote by $(X_t)_{t\geq 0}$  the symmetric diffusion process generated by $\frac{1}{2}L$ denote by $\mathbf{e}$ its lifetime. In the sequel of the proof, the symbol $\circ d $ denotes the Stratonovitch stochastic derivative.

Consider the process $\tau_t^{\varepsilon}:T_{X_t}^*\mathbb{M}\rightarrow T^*_{X_0}\mathbb{M}$ which is the solution of the following covariant Stratonovitch stochastic differential equation:
\begin{equation}\label{tau_t}
\circ d[\tau_t^{\varepsilon}\alpha(X_t)]=\tau_t^{\varepsilon}\left( \nabla_{\circ dX_t}-\mathfrak{T}_{\circ dX_t}^{\varepsilon}-\frac{1}{2}
\left(\frac{1}{\varepsilon}\mathbf{J}^2 +\mathfrak{Ric}_{\mathcal{H}}\right)dt\right) \alpha(X_t),~~\tau_0^{\varepsilon}=\mathbf{Id},
\end{equation}
where $\alpha$ is any smooth one-form. It is seen from the Stratonovitch integration by parts formula that we have 
\begin{equation}\label{tau=M Theta}
\tau^{\varepsilon}_t=\mathcal{M}_{t}^{\varepsilon}\Theta_{t}^{\varepsilon}
\end{equation}
where the process $\Theta_t^{\varepsilon}: T_{X_t}^{*}\mathbb{M}\rightarrow T_{X_0}^{*}\mathbb{M}$ is the solution of the following covariant Stratonovitch stochastic differential equation: 
\begin{equation}\label{Theta equation}
\circ d[\Theta_t^{\varepsilon}\alpha(X_t)]= \Theta_t^{\varepsilon}(\nabla_{\circ dX_t}-\mathfrak{T}^{\varepsilon}_{\circ dX_t})\alpha(X_t),~~\Theta_0^{\varepsilon}=\mathbf{Id}
\end{equation}
 The multiplicative functional $(\mathcal{M}_t^{\varepsilon})_{t\geq 0}$ is the solution of the following ordinary differential equation
\begin{equation}\label{multiplicative function M_t}
\frac{d\mathcal{M}_t^{\varepsilon}}{dt}=-\frac{1}{2}\mathcal{M}_t^{\varepsilon}\Theta_t^{\varepsilon}\left(\frac{1}{\varepsilon}\mathbf{J}^2+\mathfrak{Ric}_{\mathcal{H}} \right)(\Theta_t^{\varepsilon})^{-1}, ~~\mathcal{M}_0^{\varepsilon}=\mathbf{Id}.
\end{equation} 

Since $\mathfrak{T}^{\varepsilon}$ is skew-symmetric,  $\nabla - \mathfrak{T}^{\varepsilon} $ is a $g_\varepsilon$ metric connection and   $\Theta_t^{\varepsilon}$ is thus  an isometry for the Riemannian metric $g_{\varepsilon}$.  From Gronwall's lemma, we deduce then that the following pointwise bound holds
\[
\| \tau^\varepsilon_t \alpha (X_t) \|_{ \varepsilon} \le e^{\frac{1}{2}\left( K+\frac{\kappa}{ \varepsilon} \right)t} \| \alpha (X_t) \|_{ \varepsilon}.
\]
By the Feynman-Kac formula, we have for every  smooth and compactly supported one-form
\[
Q_{t/2}^\varepsilon \eta (x)=\mathbb{E}_x \left( \tau_t^\varepsilon \eta (X_t) \mathbf{1}_{t < \mathbf{e}} \right),
\]
where $\mathbb{E}_x$ denotes the expectation conditionally to $X_0=x$.
Since $dP_t=Q_t^\varepsilon d$, it follows easily that
\[
\| dP_t f (x) \|_\varepsilon \le e^{\left( K +\frac{\kappa}{\varepsilon} \right) t} P_t \| d f \|_\varepsilon (x).
\]
It is well-known that this type of gradient bound implies the stochastic completeness of $P_t$ (see \cite{Bak}).
\end{proof}

\section{Curvature-dimension inequality, Li-Yau estimates and Bonnet-Myers type theorem}

Let $\M$ be a smooth, connected  manifold with dimension $n+m$. We assume that $\bM$ is equipped with a Riemannian foliation $\mathcal{F}$ with bundle like metric $g$ and totally geodesic  $m$-dimensional leaves for which the horizontal distribution is Yang-Mills. We also assume that $\M$ is complete and that globally on $\M$, for every $\eta_1 \in \Gamma^\infty(\mathcal{H}^*)$ and $\eta_2 \in \Gamma^\infty(\mathcal{V}^*)$,
\[
\langle \mathfrak{Ric}_{\mathcal{H}} (\eta_1), \eta_1 \rangle_{\mathcal{H}} \ge  \rho_1 \| \eta_1 \|^2_{\mathcal{H}}, \quad -\langle \mathbf{J}^2 \eta_1, \eta_1  \rangle_\mathcal{H} \le \kappa  \| \eta_1 \|^2_\mathcal{H},\quad -\frac{1}{4} \mathbf{Tr}_\mathcal{H} (J^2_{\eta_2})\ge \rho_2 \| \eta_2 \|^2_\mathcal{V},
\]
for some $\rho_1 \in \mathbb{R}$, $\kappa,\rho_2 >0$. The third assumption can be thought as a uniform bracket generating condition of the horizontal distribution $\mathcal{H}$ and from H\"ormander's theorem, it implies that the horizontal Laplacian $L$ is a subelliptic diffusion operator. We insist that for the following results below to be true, the positivity of $\rho_2$ is required. 

\

We introduce the following operators defined for $f,g \in C^\infty(\M)$,
\[
\Gamma(f,g)=\frac{1}{2} ( L(fg) -gLf-fLg)=\langle \nabla_\mathcal{H} f , \nabla_\mathcal{H} g\rangle_\mathcal{H}
\]

\[
\Gamma^\mathcal{V} (f,g)=\langle \nabla_\mathcal{V} f , \nabla_\mathcal{V} g\rangle_\mathcal{V}
\]
and their iterations which are defined by
\[
\Gamma_2(f,g)=\frac{1}{2} ( L(\Gamma(f,g)) -\Gamma(g,Lf)-\Gamma(f,Lg))
\]
\[
\Gamma^\mathcal{V}_2(f,g)=\frac{1}{2} ( L(\Gamma^\mathcal{V}(f,g)) -\Gamma^\mathcal{V}(g,Lf)-\Gamma^\mathcal{V}(f,Lg))
\]
As a consequence of Theorem \ref{Bochner}, we obtain
\begin{theorem}\label{CD}
For every $f,g \in C^\infty(\M)$, and $\varepsilon >0$,
\begin{align}\label{CDI}
\Gamma_2(f,f)+\varepsilon \Gamma^\mathcal{V}_2(f,f)\ge \frac{1}{n} (Lf)^2 +\left( \rho_1 -\frac{\kappa}{\varepsilon}\right) \Gamma(f,f)+\rho_2 \Gamma^\mathcal{V} (f,f),
\end{align}
and
\[
\Gamma (f, \Gamma^\mathcal{V} (f))=\Gamma^\mathcal{V}  (f, \Gamma (f)).
\]
\end{theorem}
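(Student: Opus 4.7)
\textbf{Proof plan for Theorem \ref{CD}.}

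The plan is to apply Theorem \ref{Bochner} directly to $\eta = df$ and then read off the $\Gamma_2$-style expression from the left-hand side while controlling each term on the right-hand side via the standing curvature hypotheses. First I would observe that, by the definitions of $\Gamma$ and $\Gamma^\V$,
\[
\|df\|_\varepsilon^2 = \|df\|_\ch^2 + \varepsilon\,\|df\|_\V^2 = \Gamma(f,f) + \varepsilon\,\Gamma^\V(f,f),
\]
and that, since $\square_\varepsilon df = dLf$ by the intertwining part of Theorem \ref{Bochner},
\[
\langle \square_\varepsilon df, df\rangle_\varepsilon = \langle dLf, df\rangle_\ch + \varepsilon\,\langle dLf, df\rangle_\V = \Gamma(f,Lf) + \varepsilon\,\Gamma^\V(f,Lf).
\]
Plugging these in and using the very definition of $\Gamma_2$ and $\Gamma^\V_2$ identifies
\[
\tfrac12 L\|df\|_\varepsilon^2 - \langle \square_\varepsilon df, df\rangle_\varepsilon = \Gamma_2(f,f) + \varepsilon\,\Gamma^\V_2(f,f).
\]

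Next I would bound the right-hand side of the Bochner inequality in Theorem \ref{Bochner}. A short computation in the adapted frame of Lemma \ref{frame} shows that $\mathbf{Tr}_\ch\nabla_\ch^{\#}df = \sum_i \nabla_{X_i}\nabla_{X_i}f - \nabla_{\nabla_{X_i}X_i}f = Lf$, which takes care of the first term. The Yang-Mills condition $\delta_\ch T = 0$ kills the torsion term. The three remaining terms are then controlled by the standing hypotheses:
\begin{align*}
\langle \mathfrak{Ric}_\ch(df), df\rangle_\ch &\ge \rho_1\,\Gamma(f,f),\\
\tfrac{1}{\varepsilon}\langle \mathbf J^2 df, df\rangle_\ch &\ge -\tfrac{\kappa}{\varepsilon}\Gamma(f,f),\\
-\tfrac14\mathbf{Tr}_\ch(J_{df}^2) &= -\tfrac14\mathbf{Tr}_\ch(J_{(df)_\V}^2) \ge \rho_2\,\Gamma^\V(f,f),
\end{align*}
where in the last line I use that $J_Z = 0$ for horizontal $Z$, so only the vertical part of $df$ contributes and the third hypothesis applies. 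Summing these four estimates yields exactly \eqref{CDI}.

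For the commutation identity $\Gamma(f,\Gamma^\V(f)) = \Gamma^\V(f,\Gamma(f))$ I would work pointwise in the local frame of Lemma \ref{frame}. Expanding both sides gives
\[
\Gamma(f,\Gamma^\V(f)) - \Gamma^\V(f,\Gamma(f)) = 2\sum_{i,\ell} (X_i f)(Z_\ell f)\,[X_i, Z_\ell]f = 2\sum_{i,\ell,j}\beta_{i\ell}^{j}\,(X_i f)(Z_\ell f)(Z_j f).
\]
For each fixed $i$, the inner sum over $\ell,j$ pairs the skew-symmetric tensor $\beta_{i\ell}^{j} = -\beta_{ij}^{\ell}$ against the symmetric tensor $(Z_\ell f)(Z_j f)$, so it vanishes. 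The crucial geometric input is exactly the skew-symmetry of $\beta$, which was derived in Lemma \ref{frame} from the leaves being totally geodesic.

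The main obstacle is essentially packaging, not depth: the heavy lifting was carried out in Theorem \ref{Bochner}, and the only subtlety in the present argument is the convention that $J_Z$ is defined to be zero on horizontal vectors, so that the pointwise curvature hypothesis on $-\tfrac14\mathbf{Tr}_\ch J^2_{\eta_2}$ (stated for vertical $\eta_2$) can be applied to the full one-form $df$ without loss.
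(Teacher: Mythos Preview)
Your proof is correct and follows essentially the same route as the paper: apply the Bochner inequality of Theorem \ref{Bochner} to $\eta=df$, identify the left-hand side with $\Gamma_2(f,f)+\varepsilon\Gamma_2^{\mathcal V}(f,f)$ via the intertwining $\square_\varepsilon df=dLf$, and bound each right-hand term using the Yang--Mills condition and the standing curvature hypotheses. For the commutation identity the paper simply cites \cite{Elworthy} and \cite{IHP} and remarks that it is easy to check in a local frame; your explicit computation using the skew-symmetry $\beta_{i\ell}^{j}=-\beta_{ij}^{\ell}$ is exactly that check, and it is the totally geodesic hypothesis that makes it work.
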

\begin{proof}
From Theorem 3.1, we have for every   $\eta =df$,
\begin{align*}
 \frac{1}{2} L \| \eta \|_{\varepsilon}^2 -\langle \square_\varepsilon \eta , \eta \rangle_{\varepsilon}    \ge & \frac{1}{n}\left( \mathbf{Tr}_\mathcal{H}  \nabla_\ch^\# \eta \right)^2 -\frac{1}{4} \mathbf{Tr}_\mathcal{H} (J^2_{\eta})+ \left\langle \mathfrak{Ric}_{\mathcal{H}} (\eta), \eta \right\rangle_\mathcal{H}  +\frac{1}{\varepsilon} \langle \mathbf{J}^2 (\eta) , \eta \rangle_\mathcal{H}.
\end{align*}
Using this inequality and taking into account the assumptions
\[
\langle \mathfrak{Ric}_{\mathcal{H}} (\eta_1), \eta_1 \rangle_{\mathcal{H}} \ge  \rho_1 \| \eta_1 \|^2_{\mathcal{H}}, \quad -\langle \mathbf{J}^2 \eta_1, \eta_1  \rangle_\mathcal{H} \le \kappa  \| \eta_1 \|^2_\mathcal{H},\quad -\frac{1}{4} \mathbf{Tr}_\mathcal{H} (J^2_{\eta_2})\ge \rho_2 \| \eta_2 \|^2_\mathcal{V},
\]
immediately yields the expected result. The intertwining $\Gamma (f, \Gamma^\mathcal{V} (f))=\Gamma^\mathcal{V}  (f, \Gamma (f))$ is proved in Appendix A in \cite{Elworthy} and easy to check in a local frame (see also \cite{IHP}).
\end{proof}

Let us  recall that the generalized curvature dimension inequality CD$(\rho_1,\rho_2,\kappa,d)$ introduced in \cite{BG} exactly writes as  \eqref{CDI}. Thus, combining  Theorems \ref{complete} and \ref{CD}, we see then that all the results proved  in the works \cite{BB,BBG,BBGM, BG, BG2, BK, Kim} apply. We obtain therefore, among many other things, the following results which are completely new in the context of Riemannian foliations:

\begin{itemize}
\item[1)] \textbf{Li-Yau type inequalities:} (\cite{BG}) For any bounded $f \in C^\infty(\bM)$, such that $f, \sqrt{\Gamma(f)}$, $\sqrt{\Gamma^\mathcal{V}(f)} \in L^2_\mu(\bM)$, $f  \ge 0$, $f \neq 0$, the following inequality holds for $t>0$:
\begin{align*}
 & \Gamma (\ln P_t f) +\frac{2 \rho_2}{3}  t \Gamma^\mathcal{V} (\ln P_t f)\\
  \le & \
\left(1+\frac{3\kappa}{2\rho_2}-\frac{2\rho_1}{3} t\right)
\frac{LP_t f}{P_t f} +\frac{n\rho_1^2}{6} t
-\frac{n \rho_1}{2}\left(
1+\frac{3\kappa}{2\rho_2}\right) +\frac{n\left(
1+\frac{3\kappa}{2\rho_2}\right)^2}{2t}.
\end{align*}
\item[2)] \textbf{Gaussian lower and  upper bounds for the horizontal heat kernel:}  (\cite{BBG}) If  $\rho_1 \ge 0$, then for any $0<\ve <1$
there exists a constant $C(\ve) = C(n,\kappa,\rho_2,\ve)>0$, which tends
to $\infty$ as $\ve \to 0^+$, such that for every $x,y\in \bM$
and $t>0$ one has
\[
\frac{C(\ve)^{-1}}{\mu(B(x,\sqrt
t))} \exp
\left(-\frac{D d(x,y)^2}{n(4-\ve)t}\right)\le p(x,y,t)\le \frac{C(\ve)}{\mu(B(x,\sqrt
t))} \exp
\left(-\frac{d(x,y)^2}{(4+\ve)t}\right).
\]
Here $D= \left(
1+\frac{3\kappa}{2\rho_2}\right)n$ and $d(x,y)$ is the sub-Riemannian distance between $x$ and $y$.
\item[3)]  \textbf{Bonnet-Myers  theorem:} (\cite{BG})  Suppose that $\rho_1 > 0$.
Then, the manifold $\mathbb{M}$ is compact   and the sub-Riemannian diameter of $\M$ satisfies the bound
\begin{align}\label{myers} \text{diam}\ \bM \le 2\sqrt{3} \pi \sqrt{
\frac{\kappa+\rho_2}{\rho_1\rho_2} \left(
1+\frac{3\kappa}{2\rho_2}\right)n }.
\end{align}
\end{itemize}

We mention that in the Bonnet-Myers theorem, the bound 
\begin{align*} \text{diam}\ \bM \le 2\sqrt{3} \pi \sqrt{
\frac{\kappa+\rho_2}{\rho_1\rho_2} \left(
1+\frac{3\kappa}{2\rho_2}\right)n }.
\end{align*}
is not sharp, as can be checked in some examples like the Hopf fibrations (see \cite{BW1,BW2} for the computation of the diameters of the Hopf fibrations). This is because the method we use in \cite{BG} is an adaption of the energy-entropy inequality methods developed by Bakry in \cite{bakry-stflour}. Even in the Riemannian case, Bakry methods are known to lead to non sharp constants.

\

Finally, at last, we observe that the methods of \cite{BK2} can be adapted to the present framework and that the following result can be proved:

\begin{proposition}
Assume $\rho_1>0$. Then the first eigenvalue $\lambda_1$ of the horizontal Laplacian $-L$  satisfies
\begin{align*}
 \lambda_1 \geq  \frac{\rho_1}{1-\frac{1}{n}+\frac{3\kappa}{4\rho_2}}.
\end{align*}

\end{proposition}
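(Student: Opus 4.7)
The plan is to adapt the Lichnerowicz-Bakry argument of \cite{BK2} to the generalized curvature dimension inequality CD$(\rho_1,\rho_2,\kappa,n)$ supplied by Theorem \ref{CD}. I would let $f \in C^\infty(\M)$, $f \not\equiv 0$, satisfy $-Lf = \lambda_1 f$, normalized so that $\int_\M f^2\,d\mu = 1$. Using the symmetry of $L$ on $C^\infty_0(\M)$ together with standard integration by parts, I would first establish the eigenfunction identities
\[
\int_\M \Gamma(f,f)\,d\mu = \lambda_1, \quad \int_\M \Gamma_2(f,f)\,d\mu = \int_\M (Lf)^2\,d\mu = \lambda_1^2, \quad \int_\M \Gamma_2^{\mathcal{V}}(f,f)\,d\mu = \lambda_1 B,
\]
where $B := \int_\M \Gamma^{\mathcal{V}}(f,f)\,d\mu \geq 0$.

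Next, I would integrate the curvature dimension inequality \eqref{CDI} against the Riemannian volume $\mu$. For every $\varepsilon > 0$ this produces the one-parameter family of constraints
\[
\left(1 - \tfrac{1}{n}\right)\lambda_1^2 + (\varepsilon\lambda_1 - \rho_2) B \geq \left(\rho_1 - \tfrac{\kappa}{\varepsilon}\right)\lambda_1,
\]
relating the two unknowns $\lambda_1$ and $B$. A direct optimization in $\varepsilon$ alone balances the $B$-dependence and yields only the weaker Lichnerowicz-type bound $\lambda_1 \geq \rho_1/(1 - 1/n + \kappa/\rho_2)$, so an additional structural input is needed.

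To sharpen the denominator from $\kappa/\rho_2$ to $3\kappa/(4\rho_2)$, I would bring in a second independent constraint relating $\lambda_1$, $B$ and $\int_\M \Gamma(f,f)\,d\mu$. Following the recipe of \cite{BK2}, this is produced by exploiting the commutation identity $\Gamma(f,\Gamma^{\mathcal{V}}(f)) = \Gamma^{\mathcal{V}}(f,\Gamma(f))$ of Theorem \ref{CD}: integrating it against $\mu$ and combining with integration by parts against the eigenfunction yields an auxiliary quadratic relation that can be read back into a second, non-trivial instance of the CD inequality applied against the weight $\Gamma(f,f)$. The two quadratic constraints on $(\lambda_1, B)$ can then be combined and jointly optimized over $\varepsilon$ through a Cauchy--Schwarz/AM-GM step, and the factor $3/4$ emerges from this optimization.

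\textbf{Hard part.} The main obstacle is precisely this refinement from $\kappa/\rho_2$ to $3\kappa/(4\rho_2)$: the naive $L^2$-integration of CD does not suffice, and the sharp constant requires both the commutation identity $\Gamma(f,\Gamma^{\mathcal{V}}(f)) = \Gamma^{\mathcal{V}}(f,\Gamma(f))$ and the careful coupling of two integrated Bochner-type inequalities. Since both the curvature dimension inequality and the commutation identity hold in the present Yang-Mills Riemannian foliation setting in exactly the same form as in the sub-Riemannian-with-transverse-symmetries framework used in \cite{BK2}, the adaptation is essentially a bookkeeping exercise, and it produces the claimed bound $\lambda_1 \geq \rho_1/(1-1/n+3\kappa/(4\rho_2))$.
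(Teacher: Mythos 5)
Your first two steps are sound: the eigenfunction identities, the integrated family of constraints $(1-\tfrac1n)\lambda_1^2+(\varepsilon\lambda_1-\rho_2)B\ge(\rho_1-\tfrac\kappa\varepsilon)\lambda_1$, and the observation that optimizing over $\varepsilon$ alone only yields $\lambda_1\ge\rho_1/(1-\tfrac1n+\tfrac{\kappa}{\rho_2})$ are all correct (indeed the whole family \eqref{CDI} is consistent with equality there when $B=\kappa\lambda_1^2/\rho_2^2$). The gap is in the step you yourself flag as the hard part: the refinement to $3\kappa/(4\rho_2)$ does not come from the commutation identity $\Gamma(f,\Gamma^{\mathcal V}(f))=\Gamma^{\mathcal V}(f,\Gamma(f))$, nor from a ``second instance of the CD inequality against the weight $\Gamma(f,f)$''. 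Integrating that identity only produces the relation $\lambda_1\int f\,\Gamma^{\mathcal V}(f)\,d\mu=-\int \Gamma(f)\,L^{\mathcal V}f\,d\mu$, which introduces new unknowns and does not close; and no scalar consequence of \eqref{CDI} plus that identity can rule out the obstruction you computed. The extra input actually used in \cite{BK2} (and available here) is the pointwise Weitzenb\"ock \emph{identity} behind Proposition \ref{prop-ineq}, kept before the cross term is discarded. Reading off the coefficients of $\varepsilon^0$ and $\varepsilon^1$ in Lemma \ref{thm-cd-sq}, one gets for $\eta=df$, in the adapted frame and under Yang--Mills,
\begin{equation*}
\Gamma_2(f)=\|\nabla_\ch^\#\eta\|_\ch^2-\tfrac14\,\mathbf{Tr}_\mathcal{H}(J^2_{\eta_\V})+C+\langle\mathfrak{Ric}_{\mathcal{H}}\eta,\eta\rangle_\ch,\qquad \Gamma_2^{\mathcal V}(f)=\sum_{i,\ell}(X_iZ_\ell f)^2,
\end{equation*}
where $C=2\sum_{\ell}\langle\nabla_\ch(Z_\ell f),J_{Z_\ell}\nabla_\ch f\rangle_\ch=2\sum_{i,j,\ell}(X_iZ_\ell f)\gamma_{ij}^\ell X_jf$.

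Two facts about $C$ then carry the whole argument. First, integrating by parts (using $\delta_\ch T=0$ and $d\eta=0$) gives $\int_\M C\,d\mu=\int_\M\mathbf{Tr}_\mathcal{H}(J^2_{\eta_\V})\,d\mu$. Second, Cauchy--Schwarz and $-\langle\mathbf{J}^2\eta,\eta\rangle_\ch\le\kappa\,\Gamma(f)$ give $C\ge-2\sqrt{\kappa\,\Gamma(f)\,\Gamma_2^{\mathcal V}(f)}$ pointwise. Splitting $C=-\tfrac12C+\tfrac32C$, applying the first fact to $-\tfrac12C$ and the second to $\tfrac32C$, and using $\int\sqrt{\Gamma(f)\Gamma_2^{\mathcal V}(f)}\,d\mu\le\lambda_1\sqrt B$ together with $-\tfrac14\mathbf{Tr}_\mathcal{H}(J^2_{\eta_\V})\ge\rho_2\Gamma^{\mathcal V}(f)$, one arrives at
\begin{equation*}
\lambda_1^2\ \ge\ \tfrac1n\lambda_1^2+\rho_1\lambda_1+3\rho_2B-3\sqrt{\kappa}\,\lambda_1\sqrt{B},
\end{equation*}
and minimizing over $B\ge0$ gives $(1-\tfrac1n)\lambda_1^2\ge\rho_1\lambda_1-\tfrac{3\kappa}{4\rho_2}\lambda_1^2$, i.e.\ the claim; the $3/4$ is $\min_{a<1/4}(1-a)^2/(1-4a)$, attained at the splitting weight $a=-1/2$ above. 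So your plan needs to be rerouted through the un-discarded Weitzenb\"ock identity and this integration by parts; as written, the mechanism you propose for the sharp constant would not produce it.
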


As a consequence of the Obata theorem proved in \cite{BK2}, this bound is sharp.

\end{document}